\documentclass[12pt]{article}
\title{KIAS Lectures on\\ Symplectic Aspects of Degenerations}
\author{Jonny Evans}
\usepackage{amsthm,amsmath,amsfonts,amscd}
\usepackage{tikz}
\usepackage{verbatim}
\usepackage{caption}
\usepackage{pgfplots}
\usetikzlibrary{decorations.pathreplacing}
\usetikzlibrary{decorations.markings}
\usetikzlibrary{decorations.pathmorphing}
\tikzset{snake it/.style={decorate, decoration=snake}}
\tikzset{->-/.style={decoration={
              markings,
              mark=at position .5 with {\arrow{>}}},postaction={decorate}}}
\tikzset{->>-/.style={decoration={
              markings,
              mark=at position .5 with {\arrow{>>}}},postaction={decorate}}}
\tikzset{->>>-/.style={decoration={
              markings,
              mark=at position .5 with {\arrow{>>>}}},postaction={decorate}}}
\tikzset{->>>>-/.style={decoration={
              markings,
              mark=at position .5 with {\arrow{>>>>}}},postaction={decorate}}}
\newcommand{\conic}[1]{
\begin{scope}[shift={#1}]
\filldraw[fill=white,draw=none] (-0.5,-0.5) to[out=45,in=-45] (-0.5,0.5) -- (0.5,0.5) to[out=-135,in=135] (0.5,-0.5) -- (-0.5,-0.5);
\filldraw[fill=white] (0,0.5) circle [x radius=0.5,y radius=0.25];
\filldraw[fill=white,dashed] (-0.5,-0.5) arc [x radius=0.5,y radius=0.25,start angle=180,end angle=0];
\filldraw[fill=white] (-0.5,-0.5) arc [x radius=0.5,y radius=0.25,start angle=180,end angle=360];
\draw (-0.5,-0.5) to[out=45,in=-45] (-0.5,0.5);
\draw (0.5,-0.5) to[out=135,in=-135] (0.5,0.5);
\end{scope}
}
\newcommand{\nodalconic}[1]{
\begin{scope}[shift={#1}]
\filldraw[fill=white,draw=none] (-0.5,-0.5) -- (0,0) -- (0.5,-0.5) -- cycle;
\filldraw[fill=white,draw=none] (-0.5,0.5) -- (0,0) -- (0.5,0.5) -- cycle;
\filldraw[fill=white] (0,0.5) circle [x radius=0.5,y radius=0.25];
\filldraw[fill=white,dashed] (-0.5,-0.5) arc [x radius=0.5,y radius=0.25,start angle=180,end angle=0];
\filldraw[fill=white] (-0.5,-0.5) arc [x radius=0.5,y radius=0.25,start angle=180,end angle=360];
\draw (-0.5,-0.5) -- (0.5,0.5);
\draw (-0.5,0.5) -- (0.5,-0.5);
\end{scope}
}
\newcommand{\circdash}[4]{
\begin{scope}[shift={#1},scale=#2,rotate=#4]
\draw[#3,dashed] (-1,0) arc [x radius=1,y radius=0.5,start angle=180,end angle=0];
\draw[#3] (1,0) arc [x radius=1,y radius=0.5,start angle=0,end angle=-180];
\end{scope}
}
\definecolor{dg}{HTML}{228B22}
\usepackage{bm}
\usepackage{pinlabel}
\usepackage{parskip}
\usepackage{ragged2e}
\usepackage{float}
\usepackage{subfig}
\usepackage[utf8]{inputenc}
\newcommand{\CC}{\mathbf{C}}
\newcommand{\QQ}{\mathbf{Q}}
\newcommand{\RR}{\mathbf{R}}

\newcommand{\ZZ}{\mathbf{Z}}
\newcommand{\cp}[1]{\mathbf{CP}^{#1}}
\newcommand{\rp}[1]{\mathbf{RP}^{#1}}

\begingroup
    \makeatletter
    \@for\theoremstyle:=definition,remark,plain\do{%
        \expandafter\g@addto@macro\csname th@\theoremstyle\endcsname{%
            \addtolength\thm@preskip\parskip
            }%
        }
\endgroup
\usepackage{graphicx}
\usepackage[capitalise,noabbrev]{cleveref}
\newtheorem{Theorem}{Theorem}[section]
\newtheorem*{Theorem*}{Theorem}
\newtheorem{Lemma}[Theorem]{Lemma}

\newtheorem{Proposition}[Theorem]{Proposition}
\theoremstyle{remark}
\newtheorem{Remark}[Theorem]{Remark}
\theoremstyle{definition}
\newtheorem{Definition}[Theorem]{Definition}
\newtheorem{Example}[Theorem]{Example}
\newtheorem{Exercise}[Theorem]{Exercise}
\crefname{Theorem}{Theorem}{Theorems}
\Crefname{Theorem}{Theorem}{Theorems}
\crefname{Lemma}{Lemma}{Lemmas}
\Crefname{Lemma}{Lemma}{Lemmas}
\crefname{Corollary}{Corollary}{Corollaries}
\Crefname{Corollary}{Corollary}{Corollaries}
\crefname{Claim}{Claim}{Claims}
\Crefname{Claim}{Claim}{Claims}
\crefname{Proposition}{Proposition}{Propositions}
\Crefname{Proposition}{Proposition}{Propositions}
\crefname{Remark}{Remark}{Remarks}
\Crefname{Remark}{Remark}{Remarks}
\crefname{Definition}{Definition}{Definitions}
\Crefname{Definition}{Definition}{Definitions}
\crefname{Example}{Example}{Examples}
\Crefname{Example}{Example}{Examples}
\crefname{Exercise}{Exercise}{Exercises}
\Crefname{Exercise}{Exercise}{Exercises}
\setcounter{tocdepth}{1}

\begin{document}
\maketitle
This is a series of three lectures I gave at the Korea Institute
of Advanced Study in June 2019 at a workshop about ``Algebraic
and Symplectic Aspects of Degenerations of Complex Surfaces''. I
will focus on the symplectic aspects, in particular on the case
of cyclic quotient surface singularities.

I would like to thank the Korea Institute for Advanced Study for
their hospitality during this conference, and acknowledge the
support of the Engineering and Physical Sciences Research
Council (EPSRC Grant EP/P02095X/1 and 2) in carrying out the
research which forms the focus of my lectures.

These notes have been available on a public Git repository since
2019, and I noticed that people occasionally cited them in the
years since. For that reason, I decided to post them on arXiv
for a more permanent record; I have made some small corrections
and annotations but otherwise they are unchanged. These notes
are a purely expository account of stuff I was thinking about
2016--2019, and are largely self-aggrandising. I think people
found the early bits useful because they draw together some
things from the literature and put them in context. Note that
much has been superseded and explained better
elsewhere. Regarding Lecture 1, a simpler and more elegant
framework for thinking about degenerations has since been
developed by Galkin and Mikhalkin
\cite{GalkinMikhalkin}. Regarding Lecture 2 and parts of Lecture
3, a more detailed exposition is given in my book
\cite{EvansBook}.

\section{Lecture 1}

\subsection{Hunting for singularities}

Suppose that someone hands you a variety \(X\), and asks you to
classify degenerations in which \(X\) appears as a smooth fibre. This
is an extremely difficult question. In this first lecture, I want to
convince you that symplectic topology can help you to rule out some
possible degenerations. In the same way that a hunted animal leaves
footprints as it flees, a singularity sometimes leaves behind a
noticeable trace in the smooth fibres of a degeneration, a trace which
is visible to symplectic geometry.

\subsubsection{Degenerations and symplectic parallel transport}

What do I mean by a degeneration in this context? I mean a flat family
\(\pi\colon\mathcal{X}\to\Delta\) over the disc whose fibres
\(X_s=\pi^{-1}(s)\) are projective varieties. In fact, I'll assume for
simplicity:
\begin{itemize}
\item that the only singular fibre is \(X_0\), and that \(X_0\) has a
unique isolated singularity \(x_0\) (this is just for convenience),
\item that \(\mathcal{X}\) is smooth away from \(x_0\) (but will usually
be singular at \(x_0\)).
\end{itemize}
More importantly, I want to assume:
\begin{itemize}
\item that the fibres \(X_s\) are projective subvarieties of the same
projective space; in other words that we have a morphism
\(f\colon\mathcal{X}\to\cp{N}\) such that \(f_s:=f|_{X_s}\colon
X_s\to\cp{N}\) is an embedding for all \(s\in\Delta\) (for example,
coming from a relatively ample line bundle on \(\mathcal{X}\)).
\end{itemize}
It is this last point which allows us to make the connection with
symplectic geometry: the Fubini-Study form \(\omega_{FS}\) on
\(\cp{N}\) pulls back to give a symplectic form
\(\omega_s=f_s^*\omega_{FS}\) on the smooth locus of each fibre
\(X_s\).

\begin{Lemma}\label{lma:spt}
The smooth fibres are all {\em symplectomorphic}. More precisely,
given a path \(\gamma\colon[0,1]\to\Delta\) avoiding \(0\in\Delta\),
there is a diffeomorphism \(\phi_t\colon X_{\gamma(0)}\to
X_{\gamma(t)}\) for all \(t\in[0,1]\) such that
\(\phi_t^*\omega_{\gamma(t)}=\omega_{\gamma(0)}\).
\end{Lemma}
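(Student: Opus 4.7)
The plan is to use symplectic parallel transport. Let \(\Omega := f^*\omega_{FS}\), a closed 2-form on \(\mathcal{X}\) (possibly degenerate on the total space, which is no obstacle). Writing \(\mathcal{X}^* := \pi^{-1}(\Delta \setminus \{0\})\), on this open set \(\pi\) is a submersion and its vertical distribution \(V := \ker d\pi\) inherits a fibrewise-non-degenerate restriction of \(\Omega\), since \(\Omega|_{X_s} = \omega_s\) is symplectic by hypothesis. Define the horizontal distribution
\[H_x := \{\xi \in T_x\mathcal{X}^* : \Omega_x(\xi,v) = 0 \text{ for all } v \in V_x\}.\]
Non-degeneracy of \(\Omega|_V\) forces \(V \cap H = 0\), and a dimension count gives the splitting \(T\mathcal{X}^* = V \oplus H\); in particular \(d\pi\) restricts to a linear isomorphism \(H_x \to T_{\pi(x)}\Delta\).

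Given the path \(\gamma\), I would take \(\xi_t\) to be the unique horizontal lift of \(\gamma'(t)\) along \(X_{\gamma(t)}\). This is a time-dependent vector field defined on \(\pi^{-1}(\gamma([0,1]))\), which is compact because \(\gamma([0,1])\) is a compact subset of \(\Delta \setminus \{0\}\) and the fibres are projective. Hence its flow is complete and yields the desired diffeomorphisms \(\phi_t\colon X_{\gamma(0)} \to X_{\gamma(t)}\).

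To verify \(\phi_t^*\omega_{\gamma(t)} = \omega_{\gamma(0)}\), I would compute
\[\frac{d}{dt}\phi_t^*\omega_{\gamma(t)} = \phi_t^*\bigl((\mathcal{L}_{\xi_t}\Omega)|_{X_{\gamma(t)}}\bigr) = \phi_t^*\bigl(d(\iota_{\xi_t}\Omega)|_{X_{\gamma(t)}}\bigr),\]
using \(d\Omega = 0\) and Cartan's magic formula. The key observation is that the 1-form \(\iota_{\xi_t}\Omega\) annihilates every vertical vector \(v \in V\): \((\iota_{\xi_t}\Omega)(v) = \Omega(\xi_t,v) = 0\) precisely because \(\xi_t \in H = V^{\perp_\Omega}\). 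Thus \((\iota_{\xi_t}\Omega)|_{X_{\gamma(t)}} = 0\), its fibrewise exterior derivative vanishes, and integrating gives the identity. The only real concern is completeness of the horizontal flow, which is ensured here by the hypothesis that \(\gamma\) avoids \(0\); were one to try to extend parallel transport across \(0\), the horizontal vector field would typically blow up near \(x_0\), which is precisely the mechanism creating vanishing cycles later in the story.
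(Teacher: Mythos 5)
Your proof is correct and follows essentially the same route as the paper: symplectic parallel transport along the horizontal distribution defined as the $\Omega$-orthogonal complement of the vertical tangent spaces. The only difference is that you write out the Cartan-formula verification that $\phi_t^*\omega_{\gamma(t)}$ is constant, whereas the paper delegates this step to \cite[Lemma 6.18]{McDuffSalamon}; your completeness argument via compactness of $\pi^{-1}(\gamma([0,1]))$ is also the right justification for why the flow exists for all $t\in[0,1]$.
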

\begin{proof}
We construct \(\phi_t\) as the parallel transport map for a
connection on the fibre bundle \(\mathcal{X}\setminus
X_0\to\Delta\setminus\{0\}\). The connection is defined as
follows. Let \(\Omega=f^*\omega_{FS}\). This is a closed (not
necessarily nondegenerate\footnote{For example, suppose you have a
pencil of hypersurfaces in \(\cp{N}\). If you blow up the base
locus, this will separate out the fibres, and give you a
degeneration. However, the pullback of the Fubini-Study form along
the blow-up is degenerate along the exceptional locus.}) 2-form on
\(\mathcal{X}\setminus\{x_0\}\). Define the horizontal space at
\(x\in X_s\) to be \[\mathcal{H}_x=\{v\in T_x\mathcal{X}\ :\
\Omega(v,w)=0\ \forall w\in T_xX_s\}.\] This is the
\(\Omega\)-orthogonal complement to the vertical tangent space
\(T_xX_s\), which is indeed complementary to \(T_xX_s\) because
\(X_s\) is a symplectic submanifold with respect to \(\Omega\)
(\(\Omega\) pulls back to \(\omega_s\) on \(X_s\)). The proof that
parallel transport with respect to this connection is a symplectic
map can be found in {\cite[Lemma 6.18]{McDuffSalamon}}. \qedhere

\end{proof}
Now suppose that \(\gamma(1)=0\) and \(\gamma(t)\neq 0\) for
\(t<1\). The symplectic parallel transport maps \(\phi_t\colon
X_{\gamma(0)}\to X_{\gamma(t)}\) are defined for \(t<1\). We would
like to define \(\phi_1\colon X_{\gamma(0)}\to X_0\) by
\[\phi_1(x)=\lim_{t\to 1}\phi_t(x).\]

\begin{Lemma}
This map \(\phi_1\) is well-defined.
\end{Lemma}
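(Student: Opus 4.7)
The plan is to show that for every $x\in X_{\gamma(0)}$ the trajectory $t\mapsto \phi_t(x)$ converges in $\mathcal{X}$ as $t\to 1$. I would argue by studying the omega-limit set
\[A(x):=\bigcap_{0<\epsilon<1}\overline{\{\phi_t(x):t\in(1-\epsilon,1)\}}\]
and reducing it to a single point.

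The trajectory lies inside $\pi^{-1}(\gamma([0,1]))$, which is compact because $(f,\pi)\colon \mathcal{X}\to\cp{N}\times\Delta$ embeds $\mathcal{X}$ as a closed subvariety. Hence $A(x)$ is nonempty and closed, and since $\pi(\phi_t(x))=\gamma(t)\to 0$ it is contained in $X_0$. By the standard fact that omega-limit sets of continuous trajectories in compact metric spaces are connected, $A(x)$ is also connected.

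The local heart of the proof is a flow-box analysis at points $y\in X_0\setminus\{x_0\}$. At such $y$, $\mathcal{X}$ is smooth and, because $\pi$ is flat and $X_0$ is smooth at $y$, $\pi$ is a submersion there. Since $f_0$ is an embedding near $y$, the form $\Omega=f^*\omega_{FS}$ restricts non-degenerately to $T_yX_0$, so the horizontal distribution and the vector field $V$ extend smoothly across a neighborhood of $y$; moreover $d\pi(V)=\gamma'(t)$ stays bounded away from zero near $t=1$. Working in local submersion coordinates in which $\pi$ is a coordinate function, I would establish the following trapping statement: for every $\delta>0$ there exist a neighborhood $U\ni y$ and a time $t^*<1$ such that any trajectory entering $U$ at a time $\ge t^*$ stays within distance $\delta$ of $y$ and converges to a unique point of $X_0\cap U$ as $t\to 1$.

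A case split then finishes things. If $x_0\notin A(x)$, pick any $y\in A(x)$; the trapping statement at $y$ immediately gives $A(x)=\{y\}$. If instead $x_0\in A(x)$ but $A(x)\neq\{x_0\}$, then by connectedness there is some $y\in A(x)\cap(X_0\setminus\{x_0\})$, and the trapping statement at $y$ confines the trajectory to a small neighborhood of $y$ for all $t$ close to $1$, contradicting $x_0\in A(x)$. Either way $A(x)$ is a single point and $\phi_1(x)$ is well-defined. I expect the flow-box estimate to be the real obstacle: it requires carefully combining the smooth extension of $V$ across the smooth locus of $X_0$ with the non-vanishing of the vertical speed $\gamma'(t)$; the compactness and connectedness steps are then routine.
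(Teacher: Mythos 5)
Your argument is essentially the paper's own proof, repackaged: the paper also uses compactness of $\mathcal{X}$ to extract subsequential limits in $X_0$, invokes well-posedness of the parallel transport ODE near smooth points of $X_0$ to pin down the limit there, and concludes with the same dichotomy (either some limit point is smooth, or every limit point is $x_0$). Your omega-limit-set formulation and explicit trapping estimate make the "well-posedness forces a unique limit" step more careful than the paper's terse version, but the approach is the same.
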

\begin{proof}
Since \(\mathcal{X}\) is compact, any sequence \(\phi_{t_i}(x)\),
\(t_i\in[0,1]\) has a convergent subsequence and if \(t_i\to 1\)
then the limit is in the fibre \(X_0\). Pick a convergent
subsequence and define \(\phi_1(x)\) to be the limit. We need to
show that this limit is independent of the choice of convergent
subsequence.

Suppose the limit \(y=\lim\phi_{t_i}(x)\) of some subsequence is a
smooth point of \(X_0\). In this case, the horizontal space is still
well-defined at \(y\), and if we look in a neighbourhood of \(y\)
then the parallel transport problem (an ordinary differential
equation) is well-posed. In particular, the only way this can happen
is if \(\phi_t(x)\) for \(t\in[1-\epsilon,1]\) is a solution of the
parallel transport ODE on this neighbourhood. In that case, by
uniqueness of solutions to ODEs, any other subsequence will converge
to the same limit point \(y\).

The only other possibility is that no subsequence converges to a
smooth point, in which case every subsequence converges to \(x_0\)
(because there's only one singularity by assumption). \qedhere

\end{proof}
In more general contexts, for example with several singularities or
even nonisolated singularities, you could use prove the existence of
this map \(\phi_1\) using more sophisticated arguments involving
\L{}ojasiewicz's inequality (for example, this approach is taken by
Harada and Kaveh in their work on toric degenerations).

\subsubsection{Signs left by singularities}

We now use the map \(\phi_1\colon X_{\gamma(0)}\to X_0\) to find
objects in the smooth fibre \(X_{\gamma(0)}\) which are signs that the
singularity is present in \(X_0\). The three signs we look for are:
the {\em vanishing cycle}, the {\em link} and the {\em Milnor fibre}.

\begin{Definition}[Vanishing cycle]\label{dfn:vc}
Let \(V:=\{x\in X_{\gamma(0)}\ :\ \phi_1(x)=x_0\}\). This is called
the {\em vanishing cycle} of the singularity (with respect to the
path \(\gamma\)).

\end{Definition}
\begin{Remark}
There is no {\em a priori} reason for the vanishing cycle to be
nice, e.g. a submanifold or cell complex. It will often turn out in
practice to be a Lagrangian cell complex, that is a cell complex of
dimension \(\frac{1}{2}\dim X\) on which the symplectic form
vanishes.

\end{Remark}
\begin{Definition}[Link]\label{dfn:link}
Let \(S_\epsilon\subset\cp{N}\) be the sphere of radius \(\epsilon\)
centred at \(f(x_0)\) and let
\(L_\epsilon=f_0^{-1}(S^{2N+1}_\epsilon)\cap X_0\). For sufficiently
small \(\epsilon\), \(L_\epsilon\) is a contact-type hypersurface
called the {\em link} of \(x_0\).

\end{Definition}
\begin{Remark}
Recall that a hypersurface \(M\) of a \(2n\)-dimensional symplectic
manifold is said to be of {\em contact type} if there exists a
1-form \(\lambda\) on a neighbourhood of \(M\) such that
\(\omega=d\lambda\) on this neighbourhood and such that
\(\lambda\wedge \underbrace{d\lambda \wedge \cdots\wedge
d\lambda}_{n-1}\) is a nowhere vanishing \((2n-1)\)-form on \(M\)
(the pullback of \(\lambda\) to \(M\) is called a {\em contact
form}).

\end{Remark}
\begin{Lemma}
There is a contact-type hypersurface
\(M:=\phi_1^{-1}(L_{\epsilon})\subset X_{\gamma(0)}\) which is
contactomorphic to the link of \(x_0\).
\end{Lemma}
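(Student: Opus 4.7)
The plan is to show that $\phi_1$ restricts to a symplectic diffeomorphism from a suitable open neighbourhood of $M$ in $X_{\gamma(0)}$ onto a neighbourhood of $L_\epsilon$ in $X_0\setminus\{x_0\}$, and then to pull back the given contact-type structure on $L_\epsilon$.

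First I would verify that parallel transport is a genuine diffeomorphism, not merely a pointwise limit, on a neighbourhood of $M$. Having chosen $\epsilon$ small, the link $L_\epsilon$ is disjoint from $x_0$ and lies in the smooth locus of $X_0$, so the horizontal distribution $\mathcal{H}$ is smoothly defined on a neighbourhood of $L_\epsilon$ in $\mathcal{X}\setminus\{x_0\}$. Because $L_\epsilon$ is compact and bounded away from $x_0$, I can run the parallel transport ODE backwards along $\gamma$ on some tubular neighbourhood $U_0$ of $L_\epsilon$ in $X_0$ to obtain a smooth map $\Psi\colon U_0\to X_{\gamma(0)}$ which is the inverse of $\phi_1$ on its image. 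Setting $U:=\Psi(U_0)$, the restriction $\phi_1|_U\colon U\to U_0$ is a smooth diffeomorphism, and $M=\phi_1^{-1}(L_\epsilon)\cap U$ is a smooth hypersurface.

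Next, I would check that this diffeomorphism is symplectic. For $t<1$ we have $\phi_t^*\omega_{\gamma(t)}=\omega_{\gamma(0)}$ by \cref{lma:spt}. On $U_0$ the ODE has no singular behaviour, so $\phi_t\to\phi_1$ in $C^\infty$ on $U$ as $t\to 1$, and passing to the limit gives $\phi_1^*\omega_0=\omega_{\gamma(0)}$ on $U$. Now pick a 1-form $\lambda_0$ on (a possibly smaller) $U_0$ with $d\lambda_0=\omega_0$ and with $\lambda_0\wedge(d\lambda_0)^{n-1}$ nowhere vanishing on $L_\epsilon$, as guaranteed by \cref{dfn:link}. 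Define $\lambda:=\phi_1^*\lambda_0$ on $U$. Then $d\lambda=\omega_{\gamma(0)}$, and since $\phi_1|_U$ is a diffeomorphism, $\lambda\wedge(d\lambda)^{n-1}$ is nowhere vanishing along $M$. This exhibits $M$ as a contact-type hypersurface, and $\phi_1|_M\colon M\to L_\epsilon$ is by construction a contactomorphism.

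The main obstacle is the first step: justifying that $\phi_1$, which is only defined as a pointwise limit and potentially pathological on the vanishing cycle $V$, nevertheless extends to a genuine smooth diffeomorphism on a full neighbourhood of $M$. The key input is the well-posedness of the parallel transport ODE at smooth points of $X_0$ together with compactness of $L_\epsilon$, exactly as in the subsequence argument for the preceding lemma; once this is in place, the remaining steps are formal.
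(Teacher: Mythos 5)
Your proposal is correct and follows essentially the same route as the paper: the key point in both is that away from $x_0$ (equivalently, away from the vanishing cycle) the parallel transport ODE is well-posed, so $\phi_1$ restricts to a smooth symplectomorphism near $L_\epsilon$, and the contact-type condition transports across it. Your version just makes the neighbourhood construction and the pullback of the Liouville form $\lambda_0$ more explicit than the paper does.
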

\begin{proof}
The map \(\phi_1\) restricts to give a map \(X_{\gamma(0)}\setminus
V\to X_0\setminus\{x_0\}\). While \(\phi_1\) is only continuous,
this restriction is smooth. This is because it is defined as the
time 1 map of an ordinary differential equation (the parallel
transport problem) which is well-posed wherever the horizontal
spaces are well-defined (in particular anywhere except \(x_0\)). The
argument from {\cite[Lemma 6.18]{McDuffSalamon}} carries through and
shows that it is symplectic. Therefore, as \(L_{\epsilon}\) is
disjoint from \(x_0\), \(M:=\phi_1^{-1}(L_\epsilon)\) is still a
contact-type hypersurface in \(X_{\gamma(0)}\) and \((\phi_1)|_{M}\)
is the desired contactomorphism to \(L_\epsilon\). \qedhere

\end{proof}
\begin{Definition}[Milnor fibre]\label{dfn:milnorfibre}
The contact-type hypersurface \(M\) divides \(X_{\gamma(0)}\) into
two regions: a region \(W\) containing the vanishing cycle \(V\) and
its complement. The region \(W\) is called the {\em Milnor fibre} of
the singularity. It is a codimension zero symplectic submanifold
which is mapped via \(\phi_1\) onto a neighbourhood of \(x_0\).

\end{Definition}
\begin{Remark}
We will often be slightly vague about Milnor fibres. If someone
tells you they have found a symplectically embedded ball in a
manifold, you should ask them: how big a ball? Balls of different
radii are not symplectomorphic (they have different volumes) and
there are extremely subtle questions about how big a ball you can
embed in a given manifold. Similarly, just because you find two
singularities which are locally analytically equivalent, the Milnor
fibres you find may not be symplectomorphic: what will be true
instead is that you can find inside each of them a common smaller
subset. Said another way, the Milnor fibres of two analytically
equivalent isolated singularities will have symplectomorphic {\em
completions} (if you attach an infinite cylindrical end); see
{\cite[Proposition 11.22]{CieliebakEliashberg}}. Apart from in
\cref{cor:mori}, we will ignore these subtle quantitative questions
as they have little bearing on what we want to say, and always allow
ourselves to pass to a smaller neighbourhood of the vanishing cycle
if necessary, using the same notation for a Milnor fibre and its
completion.

\end{Remark}
\subsection{Cyclic quotient singularities}

We now take a concrete example, and identify the link, the Milnor
fibre and the vanishing cycle. Let \(G\) be the cyclic group of
\(n\)th roots of unity and let \(\mu\in G\) act on \(\CC^2\)
(coordinates \((x,y)\)) via \[(x,y)\mapsto (\mu x,\mu^a y),\] where
\(a\) is an integer coprime to \(n\). The quotient \(\CC^2/G\) by this
group action has an isolated singularity at the origin, which we call
the {\em cyclic quotient singularity of type} \(\frac{1}{n}(1,a)\). We
will say that \(X_0\) has a singularity \(x_0\) of this type if
\(x_0\) has a neighbourhood (in the Euclidean topology) which is
biholomorphic to a neighbourhood of \(0\in\CC^2/G\).

\subsubsection{The link}

The unit sphere in \(\CC^2\) is preserved by the action of \(G\) and
the quotient of the sphere by this action is a 3-manifold called a
{\em lens space} \(L(n,a)\). This means that the link of the
\(\frac{1}{n}(1,a)\) singularity is such a lens space. Therefore if
you have a smooth variety \(X\) which does not contain any
contact-type hypersurfaces contactomorphic\footnote{We equip
\(L(n,a)\) with the contact structure it inherits as a quotient of the
standard contact structure on \(S^3\).} to \(L(n,a)\) then it does not
admit any degenerations with \(\frac{1}{n}(1,a)\) singularities.

\subsubsection{Milnor fibre}

Cyclic quotient singularities can have different smoothings, so there
can be several different possible Milnor fibres to look out for. The
smoothings were classified by Koll\'{a}r and Shepherd-Barron
\cite{KSB} and we will discuss this in \cref{sct:ksb}. In the
meantime, we will focus on a specific, important class of examples.

\begin{Definition}[T-singularities]\label{dfn:tsing}
A cyclic quotient singularity is called a T-singularity if it is of
the form \(\frac{1}{dp^2}(1,dpq-1)\) with \(\gcd(p,q)=1\).

\end{Definition}
\begin{Remark}\label{rmk:tsing}
These are precisely the cyclic quotient singularities admitting a
\(\QQ\)-Gorenstein smoothing with terminal total space, hence the
name ``T''.

\end{Remark}
Fix a T-singularity \(\frac{1}{dp^2}(1,dpq-1)\). If we define the
semi-invariants \[u=x^{dp},\quad v=y^{dp},\quad w=xy\] then:
\begin{itemize}
\item \(uv=w^p\)
\item under the action of \(\mu\in G\),
\[(u,v,w)\mapsto(\mu^{dp}u,\mu^{-dp}v,\mu^{dpq}w)\]
(e.g. \(y^{dp}\mapsto(\mu^{dpq-1}y)^{dp}=\mu^{-dp}y^{dp}\) because
\(\mu^{d^2p^2q}=1\)).
\end{itemize}
In fact, we get \[\CC[x,y]^G=\CC[u,v,w]^{G'}/(uv=w^{dp}),\] where
\(G'\) is the cyclic group of order \(p\) generated by
\(\mu^{dp}\). The family of varieties \[X_s:=\{(u,v,w)\in\CC^3\ :\
uv=(w^p-s)(w^p-2s)\cdots(w^p-ds)\}/G'\] is therefore a smoothing of
\(\frac{1}{dp^2}(1,dpq-1)\) (here \(G'\) acts with weights \(1,-1,q\)
on \(u,v,w\) respectively, and \(s\) is the smoothing parameter).

\begin{Remark}
Note that the action of \(G'\) on
\(uv=(w^p-s)(w^p-2s)\cdots(w^p-ds)\) is free, so \(X_s\) is a smooth
variety.

\end{Remark}
\begin{Definition}
We define the manifold \(B_{d,p,q}\) to be the symplectic manifold
underlying the affine variety \(X_1\) (we could take any \(X_s\),
\(s\neq 0\), as they are all symplectomorphic by parallel
transport). This is (the symplectic completion of) the Milnor fibre
of the cyclic quotient singularity \(\frac{1}{dp^2}(1,dpq-1)\).

\end{Definition}
\begin{Remark}
Momentarily, we will study the vanishing cycle and show that it is a
Lagrangian cell complex (i.e. a 2-dimensional cell complex on which
the symplectic form vanishes) which comprises a chain of \(d-1\)
Lagrangian spheres attached to a certain immersed Lagrangian disc
called a {\em pinwheel}. Indeed, \(B_{d,p,q}\) deformation retracts
onto the vanishing cycle, which tells us that
\[H_1(B_{d,p,q};\ZZ)=\ZZ/p,\qquad H_2(B_{d,p,q};\ZZ)=\ZZ^{d-1}.\] In
the applications below, we are most interested in the case \(d=1\)
as the Milnor fibre is then a rational homology ball, so it can be
difficult to rule out its appearance on purely topological
grounds. In this case, we write \(B_{p,q}:=B_{1,p,q}\).

\end{Remark}
\subsubsection{Vanishing cycle}

\begin{Definition}
Consider the map \(S^1\to S^1\), \(z\mapsto z^p\). The cell complex
obtained by attaching a 2-cell to \(S^1\) using this as the
attaching map is called a {\em pinwheel}. Here is picture indicating
how to identify segments of the boundary of the disc to get a
pinwheel (for example, when \(p=2\), this is the usual picture of
\(\rp{2}\) as a disc with opposite pairs of points on the boundary
identified).

\end{Definition}
\begin{center}
\tikzset{circarrows/.style={postaction={decorate},decoration={markings,mark=at position 0.1 with {\arrow{>}}},
decoration={markings,mark=at position 0.3 with {\arrow{>}}},
decoration={markings,mark=at position 0.5 with {\arrow{>}}},
decoration={markings,mark=at position 0.7 with {\arrow{>}}},
decoration={markings,mark=at position 0.9 with {\arrow{>}}},}}
\begin{tikzpicture}
\draw[circarrows] (0,0) circle [x radius=1,y radius=1];
\node at (0:1) {\(\bullet\)};
\node at (72:1) {\(\bullet\)};
\node at (144:1) {\(\bullet\)};
\node at (216:1) {\(\bullet\)};
\node at (288:1) {\(\bullet\)};
\node at (360:1) {\(\bullet\)};

\end{tikzpicture}
\end{center}
\begin{Proposition}
Let \(\gamma(t)=1-t\). The vanishing cycle associated to \(\gamma\)
in \(B_{d,p,q}=X_1\) is a cell complex comprising a chain of \(d-1\)
Lagrangian spheres attached to a Lagrangian pinwheel.

\end{Proposition}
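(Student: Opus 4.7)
The plan is to lift to the free $G'$-cover $\tilde X_1 = \{uv = \prod_{i=1}^d(w^p-i)\}$ of $X_1$ and compute the vanishing cycle there using the Lefschetz fibration $\tilde\pi\colon \tilde X_1\to\CC$ given by $(u,v,w)\mapsto w$. This fibration has $dp$ Morse critical values $w_{i,k}=i^{1/p}\zeta_p^k$ (with $\zeta_p=e^{2\pi i/p}$, $1\le i\le d$, $0\le k\le p-1$) arranged in $d$ concentric rings of $p$ points each, with generic fibre the affine conic $\{uv=c\}\cong\CC^*$. The free $G'$-action on $\tilde X_1$ covers a rotation by $\zeta_p^q$ on the base, cyclically permuting the $p$ critical values within each ring, and acts on each central fibre by $(u,v)\mapsto(\nu u,\nu^{-1}v)$.

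As $s$ decreases along $\gamma$, the critical values of $\tilde\pi_s$ in $\tilde X_s$, namely $w_{i,k}(s)=(is)^{1/p}\zeta_p^k$, shrink radially to $0$, so the vanishing cycle $\tilde V\subset\tilde X_1$ is the union $\bigcup_{i,k}\tilde\Delta_{i,k}$ of $dp$ Lagrangian Lefschetz thimbles over perturbations of the radial segments $[0,w_{i,k}]$ (perturbed slightly to avoid collisions with intermediate critical values on the same ray). The crucial observation is that the Hamiltonian $S^1$-action $(u,v,w)\mapsto(e^{i\theta}u,e^{-i\theta}v,w)$ on $\tilde X_1$, with moment map $\mu=\tfrac12(|u|^2-|v|^2)$, commutes with the entire family structure and hence with symplectic parallel transport. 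The local vanishing cycle of each Morse critical point $uv \approx c_{i,k}(w-w_{i,k})$ satisfies $|u|=|v|$, so by equivariance every thimble $\tilde\Delta_{i,k}$ lies in $\{\mu=0\}$; its boundary therefore lies in $\{\mu=0\}\cap F_0$ on the central fibre $F_0=\tilde\pi^{-1}(0)=\{uv=(-1)^d d!\}$, which is the single Lagrangian circle $\tilde C=\{|u|=|v|=\sqrt{d!}\}\cap F_0$. Hence all $dp$ thimble boundaries coincide with $\tilde C$, and $\tilde V$ consists of $dp$ Lagrangian discs glued along this common circle. The main technical obstacle is this coincidence-of-boundaries claim: while $S^1$-equivariance forces each boundary into $\{\mu=0\}\cap F_0$, one must verify via the local model at each critical point (with $c_{i,k}=p\,i^{(p-1)/p}\zeta_p^{-k}\prod_{j\ne i}(i-j)$) and a dimension count that each boundary is a full circle and is fully carried to $\tilde C$ by parallel transport.

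Taking the $G'$-quotient: the $p$ thimbles in each ring fuse to a single disc (since $G'$ permutes them freely), while $G'$ acts on $\tilde C$ by rotation of order $p$. Each quotient disc's boundary therefore wraps $p$ times around $C=\tilde C/G'$, realising it as a Lagrangian pinwheel. Thus $V=\tilde V/G'$ is $d$ Lagrangian pinwheels sharing the common circle $C\subset F_0/G'$, a cell complex with the expected homology $H_1=\ZZ/p$, $H_2=\ZZ^{d-1}$. To match the stated form as a chain of $d-1$ Lagrangian spheres attached to a pinwheel, one uses that in $\pi_1$ of a single pinwheel (which is $\ZZ/p$) the class $p[C]$ is trivial, so each additional pinwheel's $2$-cell is attached via a null-homotopic map and can be Lagrangian isotoped, via a surgery along $C$, to an honest Lagrangian $2$-sphere attached to the base pinwheel; arranging $d-1$ such spheres consecutively produces the claimed chain.
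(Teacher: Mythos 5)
Your covering-space setup and your use of the moment map \(\mu=\tfrac12(|u|^2-|v|^2)\) to pin the thimbles into \(\{\mu=0\}\) are sound and are essentially the same mechanism as the paper's conserved quantity \(H\) (\cref{lma:sptconic}); the analysis of how the free \(G'\)-action turns a disc whose boundary is \(\tilde{C}\) into a pinwheel is also correct. The problem is the very first step: you assert that the vanishing cycle of the degeneration \(s\mapsto 0\) equals the union of Lefschetz thimbles of the auxiliary fibration \((u,v,w)\mapsto w\) over \emph{perturbed} radial segments. The vanishing cycle is the specific subset \(\{x: \phi_1(x)=x_0\}\) determined by parallel transport in the \(s\)-family (\cref{dfn:vc}), and for this (non-generic) one-parameter smoothing it is not a union of embedded thimbles over disjoint paths. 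The perturbation ``to avoid collisions with intermediate critical values on the same ray'' is exactly where the answer changes: the critical values \((is)^{1/p}\zeta_p^k\) collapse to \(0\) along the unperturbed rays, with \(d\) of them colliding along each ray simultaneously, and the set of points swept into the origin is the degenerate complex lying over the genuine segments \(\{t\zeta_p^k: t\in[0,d^{1/p}]\}\) — i.e.\ circles that repeatedly pinch at the intermediate nodes. That is what produces the chain of \(d-1\) spheres attached to the pinwheel. The paper establishes this not by choosing paths in the \(w\)-plane but by exhibiting Lagrangians in the total space of the degeneration (the fixed loci of the antisymplectic involutions \(\sigma_\pm(u,v,w,s)=(\pm\bar{v},\pm\bar{u},\bar{w},\bar{s})\)), which are preserved by parallel transport by \cref{lma:spt-lag}, and reading off which part of them collapses to \(x_0\); together with the conic fibration \((u,v,w)\mapsto w^p\) this identifies \(V=\bigcup_{t\in[0,d]}C_t\).

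The downstream symptom of this gap is that your computed complex — \(d\) Lagrangian pinwheels glued along a single common circle \(C\) — is a different cell complex from the one in the statement (it has the same homology, but for instance every \(2\)-cell meets \(C\), whereas in the true vanishing cycle the spheres meet the pinwheel only at isolated nodal points). Your closing paragraph implicitly concedes this and tries to repair it by ``Lagrangian isotoping, via a surgery along \(C\)'', each extra pinwheel into a sphere attached at a point. This is not a valid move: the vanishing cycle is a specific subset of \(X_1\), not something defined up to isotopy, and in any case a disc attached to \(C\) by a degree-\(p\) map is not isotopic (even topologically, rel nothing useful) to a sphere wedged on at a point. If you drop the perturbation and instead transport the vanishing circles along the actual rays — accepting that the resulting ``thimble'' over \([0,d^{1/p}]\zeta_p^k\) is not a disc but a disc with \(d-1\) spheres strung onto it — your equivariance argument then does produce the correct answer after quotienting by \(G'\).
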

Here, {\em Lagrangian} means that the symplectic form \(\omega_1\)
evaluates to zero on pairs of tangent vectors to 2-cells in the
vanishing cycle. The rest of this section will be spent justifying
this claim.

Consider the projection
\begin{equation}\label{eq:conicfib}B_{d,p,q}\to\CC,\qquad
(u,v,w)\mapsto w^p.\end{equation} This is a holomorphic map whose
general fibre is a conic \(uv=\mbox{constant}\). It has the following
singular fibres:
\begin{itemize}
\item over \(w^p=1,2,3,\ldots,d\) there are nodal conics \(uv=0\);
\item over \(w^p=0\) the fibre is a smooth conic but it is not reduced: it
is the quotient by \(G'\) of \(\{uv=d!\}\). The generic fibre
collapses \(p\)-to-\(1\) onto this fibre.
\end{itemize}
In the figure below we indicate these singular fibres.

\begin{center}
\begin{tikzpicture}
\filldraw[fill=lightgray,opacity=0.5,draw=none] (0,-0.5) -- (8.5,-0.5) -- (9.5,2.5) -- (1,2.5) -- cycle;
\node at (6.5,-0.5) [below right] {\(\CC\)};
\conic{(2,2)};
\nodalconic{(4,2)};
\nodalconic{(8,2)};
\node at (2,1) {\(0\)};
\node at (4,1) {\(1\)};
\node at (6,1) {\(\cdots\)};
\node at (8,1) {\(d\)};

\end{tikzpicture}
\end{center}
The following lemma gives a way to construct Lagrangian submanifolds
in the total space of a conic fibration (or more general degeneration)
from Lagrangians in the fibres. We also state and prove the (trickier)
converse because we will use it later.

\begin{Lemma}\label{lma:spt-lag}
Suppose \(\pi\colon\mathcal{Y}\to\CC\) is a degeneration/conic
fibration and that \(\Omega\) is a closed 2-form on \(\mathcal{Y}\)
whose pullback to fibres of \(\pi\) is nondegenerate. Let
\(\gamma\colon[0,1]\to\CC\) be a path in the base of the fibration
and let \(\phi_t\colon Y_{\gamma(0)}\to Y_{\gamma(t)}\) be the
symplectic parallel transport map. If \(L\subset Y_{\gamma(0)}\) is
a Lagrangian in the fibre then \(\bigcup_{t\in[0,1]}\phi_t(L)\) is
\(\Omega\)-Lagrangian \(\mathcal{Y}\). Conversely, if
\(\mathcal{L}\subset\mathcal{Y}\) is a Lagrangian which lives
submersively over \(\gamma\) then
\(\mathcal{L}=\bigcup_{t\in[0,1]}\phi_t(\mathcal{L}\cap
Y_{\gamma(0)})\).
\end{Lemma}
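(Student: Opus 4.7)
The plan is to work point-by-point, using how $T_p\mathcal{Y}$ splits into the vertical tangent space $T_pY_{\pi(p)}$ and the horizontal distribution $\mathcal{H}_p$ (the $\Omega$-orthogonal complement of $T_pY_{\pi(p)}$). The two inputs I will lean on are: (i) parallel transport $\phi_t$ pulls back the fibrewise form on $Y_{\gamma(t)}$ to the one on $Y_{\gamma(0)}$ (the content of \cref{lma:spt}), and (ii) by construction of the connection, $\Omega(h,v)=0$ whenever $h$ is horizontal and $v$ is vertical. The proof is then essentially a dimension count plus a linear-algebraic orthogonality argument.

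For the forward direction, I would write down the tangent space of $\mathcal{L}:=\bigcup_{t\in[0,1]}\phi_t(L)$ at a point $p=\phi_t(x)$ with $x\in L$. It is spanned by $d\phi_t(T_xL)$, which is vertical, together with the horizontal lift of $\dot\gamma(t)$ at $p$, which is the infinitesimal sweep direction. Pairs of vectors from $d\phi_t(T_xL)$ are annihilated by $\Omega$ because $\phi_t$ is a fibrewise symplectomorphism and $L$ is Lagrangian; pairs mixing a horizontal vector with a vertical one are killed by the horizontal/vertical orthogonality. Hence $\mathcal{L}$ is $\Omega$-isotropic, and since $\dim\mathcal{L}=\dim L+1=\tfrac{1}{2}\dim Y_{\gamma(0)}+1=\tfrac{1}{2}\dim\mathcal{Y}$, it has the right dimension to be Lagrangian.

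For the converse, set $L_t:=\mathcal{L}\cap Y_{\gamma(t)}$. The submersion hypothesis guarantees $L_t$ is a smooth submanifold of codimension one in $\mathcal{L}$, so $\dim L_t=\tfrac{1}{2}\dim Y_{\gamma(t)}$; since $T_pL_t\subset T_p\mathcal{L}$ and $\mathcal{L}$ is $\Omega$-isotropic, $L_t$ is Lagrangian in the fibre. The main step — and the one place where I expect a bit of care is needed — is to show that at each $p\in\mathcal{L}$ the horizontal lift $V_h$ of $\dot\gamma(t)$ is already tangent to $\mathcal{L}$. To do this, pick any $V\in T_p\mathcal{L}$ with $d\pi(V)=\dot\gamma(t)$ (which exists by the submersion hypothesis); then $V-V_h$ is vertical. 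For any $W\in T_pL_t\subset T_p\mathcal{L}$, one computes $\Omega(V-V_h,W)=\Omega(V,W)-\Omega(V_h,W)=0$, using that $\mathcal{L}$ is isotropic and that $V_h$ is $\Omega$-orthogonal to all vertical vectors. Thus $V-V_h$ lies in the $\Omega$-orthogonal of $T_pL_t$ inside the fibre, which equals $T_pL_t$ because $L_t$ is Lagrangian there. Therefore $V_h=V-(V-V_h)\in T_p\mathcal{L}$.

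Once the horizontal lift field is tangent to $\mathcal{L}$, uniqueness of solutions to the parallel transport ODE implies that the integral curves of the horizontal lift through points of $L_0$ remain in $\mathcal{L}$, giving $\phi_t(L_0)\subset\mathcal{L}$, and conversely that every $p\in\mathcal{L}$ flows back along the horizontal vector field to a point of $L_0$, giving the reverse inclusion. The main obstacle, as indicated, is that middle linear-algebra step identifying $V-V_h$ as a tangent vector to $L_t$; everything else is either a dimension count or a direct consequence of the connection's defining orthogonality.
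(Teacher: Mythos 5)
Your proof is correct and follows essentially the same route as the paper: the forward direction combines fibrewise symplecticity of $\phi_t$ with horizontal--vertical $\Omega$-orthogonality, and the converse shows the horizontal lift of $\dot\gamma$ is tangent to $\mathcal{L}$ by noting its difference from a tangent vector of $\mathcal{L}$ is vertical and $\Omega$-orthogonal to the fibre-Lagrangian $L_t$. The only (cosmetic) difference is that you invoke ``a Lagrangian subspace is its own symplectic orthogonal'' where the paper writes out a symplectic basis, and you spell out the final ODE-uniqueness step the paper leaves implicit.
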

\begin{proof}
If \(L\subset Y_{\gamma(0)}\) is Lagrangian and \(\xi\) is a
horizontal vector at \(x\) then \(\omega(T_xL,\xi)=0\) because
\(\xi\) is symplectically orthogonal to fibres. If \(\xi\) is the
horizontal lift of \(\dot{\gamma}\) then the submanifold
\(\bigcup_{t\in[0,1]}\phi_t(L)\) traced out by \(L\) under parallel
transport has tangent space \(T_xL\oplus\langle\xi\rangle\), which
is therefore Lagrangian.

Conversely, suppose \(\mathcal{L}\subset\mathcal{Y}\) is a
Lagrangian living submersively over \(\gamma\). Let
\(x\in\mathcal{L}\) be a point with \(\pi(x)=\gamma(t)=:s\) and let
\(\xi\) be a tangent vector to \(\mathcal{L}\) whose projection
\(\pi_*\xi\) is \(\dot{\gamma}(t)\). Let \(\xi'\in\mathcal{H}_x\) be
the horizontal lift of \(\dot{\gamma}(t)\). Since
\(\pi_*\xi=\pi_*\xi'\), we have \(\xi=\xi'+v\) for some vertical
vector \(v\). If we pick a symplectic
basis\footnote{i.e. \(\Omega(e_i,f_j)=\delta_{ij}\).}
\(e_1,\ldots,e_{\dim Y_s},f_1,\ldots,f_{\dim Y_s}\) for \(T_xY_s\)
with \(e_1,\ldots,e_{\dim Y_s}\in T_x\mathcal{L}\) then there are
numbers \(\alpha_i,\beta_i\) such that
\[\xi=\xi'+\sum\alpha_ie_i+\sum\beta_if_i.\] Since \(\mathcal{L}\)
is Lagrangian, \(\Omega(\xi,e_j)=0\) for all \(j\). Since \(\xi'\)
is horizontal, \(\Omega(\xi',e_j)=0\) for all \(j\). Therefore
\(\beta_j=0\) for all \(j\), which means
\(\xi'=\xi-\sum\alpha_ie_i\in T_x\mathcal{L}\). That is, \(\xi'\) is
tangent to \(\mathcal{L}\), which means that \(\mathcal{L}\) is
preserved by parallel transport. \qedhere

\end{proof}
In our case, this means we can construct Lagrangians in \(B_{d,p,q}\)
just by taking a loop in a conic fibre and transporting it over a path
in \(\CC\). The difficulty is in actually solving the parallel
transport equation and seeing where a loop goes under parallel
transport. Happily, there is a useful conserved quantity which solves
this problem for us.

\begin{Lemma}\label{lma:sptconic}
The function \(H:=\frac{1}{2}(|u|^2-|v|^2)\colon B_{d,p,q}\to\RR\)
is preserved by symplectic parallel transport in the conic fibration
\cref{eq:conicfib}.
\end{Lemma}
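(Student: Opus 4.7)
The plan is to exhibit $H$ as the moment map for a Hamiltonian $S^1$-action on $B_{d,p,q}$ which preserves the conic fibration, and then invoke the symplectic definition of the horizontal distribution to conclude.

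First I would consider the circle action on $\CC^3$ given by $t\cdot(u,v,w)=(e^{it}u, e^{-it}v, w)$. This is Hamiltonian with respect to the standard Kähler form $\omega=\frac{i}{2}(du\wedge d\bar{u}+dv\wedge d\bar{v}+dw\wedge d\bar{w})$, with moment map precisely $H=\tfrac{1}{2}(|u|^2-|v|^2)$. The action preserves the product $uv$ and fixes $w$, hence preserves the hypersurface $\{uv=(w^p-1)\cdots(w^p-d)\}\subset\CC^3$. It also commutes with the action of $G'$ (which scales $u$ and $v$ by inverse characters and acts on $w$), so it descends to an action on $B_{d,p,q}$ preserving the restricted Kähler form, which is the relevant symplectic form on $B_{d,p,q}$.

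The crucial observation is then that, because the action does not touch the $w$-coordinate, it preserves the fibres of the projection $(u,v,w)\mapsto w^p$. Equivalently, the Hamiltonian vector field $X_H$ generating this action is a \emph{vertical} vector field for the conic fibration \cref{eq:conicfib}. Now for any horizontal tangent vector $\xi\in\mathcal{H}_x$, the defining property of horizontality gives
\[dH(\xi)=\omega(X_H,\xi)=0,\]
since $X_H\in T_xY_s$ is vertical and $\xi$ is $\omega$-orthogonal to the vertical tangent space. Thus $H$ is constant along every horizontal curve, which is exactly the statement that $H$ is preserved by symplectic parallel transport.

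There is no real obstacle here beyond the bookkeeping of checking that the $S^1$-action descends to the quotient by $G'$ and matches the symplectic form used to define the horizontal distribution in \cref{lma:spt}; the conservation then follows immediately from the standard identity $dH=-\iota_{X_H}\omega$ combined with the symplectic orthogonality of horizontal and vertical vectors.
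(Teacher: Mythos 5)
Your proposal is correct and follows essentially the same route as the paper: identify $H$ as the Hamiltonian generating the fibre-preserving circle action $(u,v,w)\mapsto(e^{i\theta}u,e^{-i\theta}v,w)$, deduce that $V_H$ is vertical, and conclude $dH(\xi)=\pm\omega(V_H,\xi)=0$ for horizontal $\xi$ by the symplectic-orthogonality definition of the horizontal distribution. The only (harmless) wrinkle is a sign inconsistency between your displayed $dH(\xi)=\omega(X_H,\xi)$ and your closing identity $dH=-\iota_{X_H}\omega$; since the quantity vanishes either way, the argument is unaffected.
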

\begin{proof}
Recall that any function \(H\) on a symplectic manifold defines a
{\em Hamiltonian vector field} \(V_H\) satisfying
\(\iota_{V_H}\omega=-dH\). The simplest example of this is if
\(\omega=dp\wedge dq\) on \(\RR^2\), in which case \(V_H=(-\partial
H/\partial q,\partial H/\partial p)\). In particular, if
\(H=\frac{1}{2}(p^2-q^2)\) then \(V_H=(-q,p)\), which generates a
flow by rotations of \(\RR^2\). Therefore, the Hamiltonian
\(\frac{1}{2}(|u|^2-|v|^2)\) generates the flow
\((e^{i\theta}u,e^{-i\theta}v,w)\) on \(B_{d,p,q}\). This flow
preserves the fibres \(uv=\mbox{const}\) so \(V_H\) is vertical. If
\(\xi\) is a horizontal vector field then
\[\mathcal{L}_{\xi}H=dH(\xi)\] but
\(dH(\xi)=-\omega(V_H,\xi)\). Since horizontal vectors are, by
definition, symplectically orthogonal to vertical vectors, this
means \(dH(\xi)=0\), so \(H\) is preserved by parallel
transport. \qedhere

\end{proof}

For each conic fibre \(\pi^{-1}(s)\), let \(C_s\) be the circle
\(H^{-1}(0)\cap \pi^{-1}(s)\). \Cref{lma:spt-lag} tells us that
\(V:=\bigcap_{t\in[0,1]}C_{\gamma(t)}\) is a Lagrangian submanifold of
\(B_{d,p,q}\).

\begin{Example}
If we look at the path \(\gamma_k(t)=k+t\) for
\(k=0,1,2,\ldots,d-1\) then:
\begin{itemize}
\item for \(k=1,2,\ldots,d-1\) we get a Lagrangian sphere (the circles
\(C_{\gamma(t)}\) collapse to points as \(t\to 0\) and \(t\to
1\)).
\item for \(k=0\) we get a Lagrangian pinwheel. This is because the
fibre over \(0\) is nonreduced, so the symplectic parallel
transport map collapses \(C_t\) \(p\)-to-\(1\) onto \(C_0\).

\end{itemize}
\end{Example}
In the picture below, we draw this configuration \(V\) in the case
\(d=3\):

\begin{center}
\begin{tikzpicture}
\filldraw[fill=lightgray,opacity=0.5,draw=none] (0,-0.5) -- (8.5,-0.5) -- (9.5,2.5) -- (1,2.5) -- cycle;
\node at (6.5,-0.5) [below right] {\(\CC\)};
\conic{(2,2)};
\nodalconic{(4,2)};
\nodalconic{(8,2)};
\nodalconic{(6,2)}
\node at (2,1) {\(0\)};
\node at (4,1) {\(1\)};
\node at (6,1) {\(2\)};
\node at (8,1) {\(3\)};
\circdash{(2,2)}{0.3}{dg}{0};
\draw[thick,dg] (2,2.15) arc [x radius=2,y radius=0.15,start angle=90, end angle=-90];
\circdash{(5,2)}{0.15}{dg}{90};
\draw[thick,dg] (5,2) circle [x radius=1,y radius=0.15];
\circdash{(7,2)}{0.15}{dg}{90};
\draw[thick,dg] (7,2) circle [x radius=1,y radius=0.15];

\end{tikzpicture}
\end{center}
Finally, we show that this particular configuration \(V\) is the
vanishing cycle for the degeneration of \(B_{d,p,q}\) to the
T-singularity \(\frac{1}{dp^2}(1,dpq-1)\).

\begin{Lemma}
If we take the path \(\gamma(t)=1-t\) in the base of the family
\(X_s\) then the vanishing cycle in \(X_1=B_{d,p,q}\) associated to
the quotient singularity of \(X_0\) is precisely \(V\).
\end{Lemma}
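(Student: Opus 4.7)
The plan is to realise the explicit $V \subset X_1$ as the $s=1$ slice of a Lagrangian $\mathcal V \subset \mathcal X$ built by the same recipe on every fibre $X_s$, and then apply the converse half of \cref{lma:spt-lag} to conclude $\phi_s(V)=V_s$, where $V_s$ visibly shrinks onto $\{x_0\}$ as $s \to 0$.

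For each $s \in (0,1]$, define $V_s \subset X_s$ by copying the recipe used for $V = V_1$: inside the conic fibration $X_s \to \CC$, $(u,v,w)\mapsto w^p$, whose critical values are now $0, s, 2s, \ldots, ds$, take the union over $r \in [0, sd]$ of the circles $\{H = 0\} \cap \pi^{-1}(r)$. A direct estimate shows $V_s \to \{x_0\}$ in the Hausdorff sense as $s \to 0$: on $V_s$ one has $|w^p| \le sd$ and $|u|^2 = |v|^2 = |(w^p - s)(w^p - 2s)\cdots(w^p - ds)|$, both tending to $0$.

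The crux is to check that $\mathcal V := \bigcup_{s \in (0,1]} V_s \subset \mathcal X$, a three-real-dimensional submanifold projecting submersively onto $(0,1]$, is Lagrangian for the Kähler form $\Omega$ inherited from $\CC^3 \times \Delta$. For this I exploit the circle action $(u,v,w,s) \mapsto (e^{i\theta}u, e^{-i\theta}v, w, s)$, which preserves $\mathcal X$, $\Omega$, and each fibre, with moment map $H$. Since $\mathcal V \subset \{H = 0\}$ is $S^1$-invariant, the Hamiltonian field $V_H$ is tangent to $\mathcal V$ and lies in the kernel of $\Omega|_{\{H=0\}}$, so it suffices to verify $\Omega|_{\mathcal V} = 0$ on a $\theta = 0$ slice. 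Choosing appropriate branches of $p$-th roots of $w^p = sr$ and of $\sqrt{uv}$ (using the reality of $s$, $w^p$, and of $uv = s^d(r-1)(r-2)\cdots(r-d)$), the slice becomes totally real in $\CC^3 \times \Delta$, and the standard Kähler form vanishes automatically on any totally real submanifold. The converse of \cref{lma:spt-lag} then gives $\phi_s(V) = V_s$ for every $s \in (0,1]$, and letting $s \to 0$ yields $\phi_1(V) = \{x_0\}$; so $V \subset \phi_1^{-1}(x_0)$.

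For the reverse inclusion, the argument of \cref{lma:sptconic} applied to the total family (using that $V_H$ is vertical and $\Omega$-orthogonal to the horizontal distribution of $\mathcal X \to \Delta$) shows $H$ is preserved by family parallel transport, so any $x$ with $H(x) \neq 0$ cannot land on $x_0$. The residual case $H(x) = 0$ with $w^p(x) \notin [0,d]$ is treated by descending to the Marsden--Weinstein reduction at level $H = 0$, whose fibres are copies of $\CC$ coordinatised by $w^p$ and in which $\mathcal V$ reduces to a Lagrangian arc; injectivity of reduced parallel transport then keeps the complement of $V$ away from $x_0$ in the limit. I expect the Lagrangian verification for $\mathcal V$ to be the main obstacle: without the totally-real-slice shortcut, one is forced to wrestle with $p$-th root branches, the sign ambiguity in $\sqrt{uv}$, and the $G'$-quotient, all of which obscure what is really a very clean statement.
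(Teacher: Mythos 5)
Your proposal is correct in outline and follows the same overall strategy as the paper --- exhibit a Lagrangian in the total space $\mathcal{X}$ fibring over the path and invoke the converse half of \cref{lma:spt-lag} --- but the two verifications of the Lagrangian condition are packaged differently, and you add material the paper omits. The paper produces its Lagrangian $\mathcal{L}$ in one stroke as $\mathrm{Fix}(\sigma_-)\cup\mathrm{Fix}(\sigma_+)$ for the antisymplectic involutions $\sigma_{\pm}(u,v,w,s)=(\pm\bar v,\pm\bar u,\bar w,\bar s)$; your ``$S^1$-invariant subset of $H^{-1}(0)$ with a totally real $\theta=0$ slice'' is really the same reality structure unpacked by hand (your branch choices $u=\pm\bar v$, $w^p$ and $s$ real are exactly the fixed-point equations of $\sigma_{\pm}$), so the involution formulation buys you a cleaner statement and spares you the wrestling with $p$-th roots, the sign of $\sqrt{uv}$, and the $G'$-quotient that you rightly identify as the annoying part. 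On the other hand, you go further than the paper on the reverse inclusion $\phi_1^{-1}(x_0)\subset V$, which the paper dispatches with ``it is easy to see \dots\ see the figure'': your observation that $H$ is conserved by parallel transport of the \emph{family} $\mathcal{X}\to\Delta$ (not just of the conic fibration of a single fibre) cleanly eliminates all points with $H\neq 0$, and passing to the reduction at $H=0$ is the right way to handle the rest. One caveat there: ``injectivity of reduced parallel transport keeps the complement of $V$ away from $x_0$ in the limit'' is not by itself a proof, since injectivity of $\phi_t$ for $t<1$ does not preclude collapse as $t\to 1$ --- indeed $V$ itself is injected for all $t<1$ and still collapses. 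To close that case you need a quantitative statement (e.g.\ that the reduced parallel transport carries a neighbourhood of $[0,d]\setminus$ in the $w^p$-plane onto a region of fixed reduced area avoiding the origin, or the paper's earlier dichotomy that any subsequential limit at a smooth point forces convergence to that smooth point). Since the paper itself supplies no argument here, this is a soft spot rather than an error, but as written it is the one step of your proof that does not yet follow.
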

\begin{proof}
Equip \(\{(u,v,w,s)\in\CC^4\ :\ uv=(w^p-s)\cdots(w^p-ds)\}\) with
the standard symplectic form coming from \(\CC^4\). Since the group
\(G'\) acts by symplectomorphisms on \(\CC^4\), this form descends
to a symplectic form \(\Omega\) on the total space
\(\mathcal{X}=\bigcup_{s\in\CC}X_s\) of our degeneration.

By \cref{lma:spt-lag}, we need to construct a Lagrangian submanifold
\(\mathcal{L}\) of \((\mathcal{X},\Omega)\) such that
\(\mathcal{L}\cap X_1=V\) and such that \(\mathcal{L}\cap X_0\)
contains \(x_0\) (using the word submanifold in the loosest sense at
this point).

Consider the antisymplectic involutions
\[\sigma_{\pm}\colon\CC^4\to\CC^4,\qquad\sigma_{\pm}(u,v,w,s)=(\pm\bar{v},\pm\bar{u},\bar{w},\bar{s}).\]
These preserve the equation \(uv=(w^p-s)\cdots(w^p-ds)\), and
commute with the action of \(G'\), so descend to give antisymplectic
involutions (which we continue to denote by \(\sigma_{\pm}\) of
\(\mathcal{X}\).

The fixed locus of an antisymplectic involution is a Lagrangian
submanifold, and
\(\mathcal{L}:=\mathrm{Fix}(\sigma_{-})\cup\mathrm{Fix}(\sigma_+)\)
intersects \(X_1\) in \(\bigcup_{t\in\RR}C_t\) (recall that
\(C_{\star}\) is the circle \(H^{-1}(0)\cap\pi^{-1}(\star)\) where
\(\pi\) is the conic fibration \((u,v,w)\mapsto w^p\)). From this it
is easy to see that \(V=\bigcup_{t\in[0,d]}C_t\subset X_1\) is the
vanishing cycle, see the figure below. \qedhere

\end{proof}
In this figure, we show a ``movie'' as \(s\) varies over \([0,1]\) of
the Lagrangians \(\mathrm{Fix}(\sigma_-)\cap X_s\) (red) and
\(\mathrm{Fix}(\sigma_+)\cap X_s\) (green). In this case, \(d=3\). If
\(d\) were even then both of the noncompact pieces of the Lagrangian
would be red.

\begin{center}
\begin{tikzpicture}
\conic{(2,2)};
\nodalconic{(4,2)};
\nodalconic{(8,2)};
\nodalconic{(6,2)}
\node at (2,1) {\(0\)};
\node at (4,1) {\(1\)};
\node at (6,1) {\(2\)};
\node at (8,1) {\(3\)};
\circdash{(2,2)}{0.3}{red}{0};
\draw[thick,red] (2,2.15) -- (0,2.15);
\draw[thick,red] (2,1.85) -- (0,1.85);
\draw[thick,red] (2,2.15) arc [x radius=2,y radius=0.15,start angle=90, end angle=-90];
\circdash{(5,2)}{0.15}{dg}{90};
\draw[thick,dg] (5,2) circle [x radius=1,y radius=0.15];
\circdash{(7,2)}{0.15}{red}{90};
\draw[thick,red] (7,2) circle [x radius=1,y radius=0.15];
\draw[thick,dg] (10,2.15) arc [x radius=2,y radius=0.15,start angle=90,end angle=270];
\begin{scope}[shift={(0,-2.2)}]
\conic{(2,2)};
\nodalconic{(3,2)};
\nodalconic{(4,2)};
\nodalconic{(5,2)}
\node at (2,1) {\(0\)};
\node at (3,1) {\(s\)};
\node at (4,1) {\(2s\)};
\node at (5,1) {\(3s\)};
\circdash{(2,2)}{0.3}{red}{0};
\draw[thick,red] (2,2.15) -- (0,2.15);
\draw[thick,red] (2,1.85) -- (0,1.85);
\draw[thick,red] (2,2.15) arc [x radius=1,y radius=0.15,start angle=90, end angle=-90];
\circdash{(3.5,2)}{0.15}{dg}{90};
\draw[thick,dg] (3.5,2) circle [x radius=0.5,y radius=0.15];
\circdash{(4.5,2)}{0.15}{red}{90};
\draw[thick,red] (4.5,2) circle [x radius=0.5,y radius=0.15];
\draw[thick,dg] (10,2.15) arc [x radius=5,y radius=0.15,start angle=90,end angle=270];
\end{scope}
\begin{scope}[shift={(0,-4.4)}]
\nodalconic{(2,2)};
\node at (2,1) {\(0\)};
\draw[thick,red] (0,2.15) arc [x radius=2,y radius=0.15,start angle=90,end angle=-90];
\draw[thick,dg] (10,2.15) arc [x radius=8,y radius=0.15,start angle=90,end angle=270];
\end{scope}

\end{tikzpicture}
\end{center}
\newpage

\section{Lecture 2}

In the previous lecture, we saw that the symplectic geometry of a
smooth variety contains clues about the kinds of singularities that
can form as the variety degenerates. We worked out in detail the
example of cyclic quotient T-singularity \(\frac{1}{dp^2}(1,dpq-1)\),
identifying the link (a contact-type lens space \(L(dp^2,dpq-1)\)),
the Milnor fibre (a codimension zero symplectic submanifold
\(B_{d,p,q}\)) and the vanishing cycle (a chain of \(d-1\) Lagrangian
spheres attached to a Lagrangian pinwheel).

In this lecture, I want to explore the symplectic geometry of these
manifolds \(B_{d,p,q}\) more deeply and introduce a different way of
representing them: the ``almost toric pictures'' discovered by
Symington \cite{Symington}. By the end of the lecture, we will be able
to draw similar pictures of any smoothing of a cyclic quotient
singularity. These almost toric pictures will be a key ingredient in
Lecture 3.

\subsection{Toric and almost toric pictures}

To construct the family of \(B_{p,q}\)s, we will introduce a new way
of representing \(B_{d,p,q}\) called an {\em almost toric picture}. In
this new representation, symplectically embedded \(B_{d,p,q}\)s are
very visible. Almost toric pictures are generalisations of toric
moment images, which we review first.

\subsubsection{Toric varieties}

Recall from \cref{lma:sptconic} that a function \(H\) on a symplectic
manifold gives rise to a Hamiltonian vector field \(V_H\) such that
\(\iota_{V_H}\omega=-dH\) and a Hamiltonian flow (the flow of
\(V_H\)). If this flow is periodic with period \(2\pi\) then we call
the resulting circle action a {\em Hamiltonian circle action}.

If we have two Hamiltonian vector fields, associated to functions
\(H_1,H_2\) then the Lie bracket \([V_{H_1},V_{H_2}]\) is again
Hamiltonian, generated by the function
\(\{H_1,H_2\}=\omega(V_{H_1},V_{H_2})\). This function is called {\em
the Poisson bracket} of \(H_1,H_2\). In particular, if the Poisson
bracket vanishes then the flows commute (the converse is not true,
e.g. consider the flows generated by the \(x\) and \(y\) coordinates
on \(\RR^2\)).

\begin{Definition}
Suppose we have a collection of Hamiltonians \(H_1,\ldots,H_n\) such
that the Poisson brackets \(\{H_i,H_j\}\) vanish and such that the
lattice of periods \[\Lambda=\{(t_1,\ldots,t_n)\in\RR^n\ :\
\phi_{H_1}^{t_1}\cdots\phi_{H_n}^{t_n}=id\}\] is
\((2\pi\ZZ)^n\). Then we get an action of the torus
\(\RR^n/\Lambda\) which we call a {\em Hamiltonian torus action}.

\end{Definition}
\begin{Remark}
If the Hamiltonian vector fields are linearly independent at some
point \(x\) then \(n\leq \dim X/2\). To see this, note that the
orbit through \(x\) has tangent space spanned by the Hamiltonian
vector fields, and \(\omega\) vanishes on these vectors because the
Poisson brackets are zero. Therefore the orbit is {\em isotropic}
(the symplectic form vanishes on it) and an isotropic subspace can
have dimension at most \(\dim X/2\); if it has dimension \(\dim
X/2\) then it is called {\em Lagrangian}. If \(n=\dim X/2\) then we
say that \(X\) is {\em toric}.

\end{Remark}
\begin{Example}\label{exm:c2}
Consider the Hamiltonians \(H_1=\frac{1}{2}|x|^2\),
\(H_2=\frac{1}{2}|y|^2\) on \(\CC^2\). These Hamiltonians generate
the Hamiltonian torus action
\((x,y)\mapsto(e^{it_1}x,e^{it_2}y)\). The image of \(\CC^2\) by the
map \(\mu=(H_1,H_2)\colon\CC^2\to\RR^2\) is the nonnegative quadrant
in \(\RR^2\). The fibres of the map \(\mu\) over the interior of
this quadrant are Lagrangian tori. The fibres over the edges are
circles and the fibre over the origin is a single point.

\end{Example}
\begin{Definition}
The map \(\mu\colon X\to\RR^n\) given by \(\mu=(H_1,\ldots,H_n)\) is
called the {\em moment map} and its regular fibres are Lagrangian
tori. The image \(\mu(X)\subset\RR^n\) is a (possibly noncompact)
convex polytope called the {\em moment image}. It is actually
possible to reconstruct \(X\) (up to equivariant symplectomorphism)
from \(\mu(X)\); this is Delzant's theorem {\cite[Theorem
2.1]{Delzant}}.

\end{Definition}
\begin{Example}\label{exm:quotsing}
The Hamiltonians \(H_1,H_2\) from \cref{exm:c2} descend to the
quotient \(\CC^2/G\) where \(G\) is the group of \(n\)th roots of
unity acting by \((x,y)\mapsto (\mu x,\mu^a y)\). The quotient
singularity \(\frac{1}{n}(1,a)\) is therefore toric, however if we
simply use \(H_1,H_2\) as before then the period lattice is not
standard: we have \(\phi_{H_1}^{2\pi/n}\phi_{H_2}^{2\pi
a/n}=id\). If instead we use the Hamiltonians
\[\left(H_2,\frac{1}{n}(H_1+aH_2)\right)\]
then the lattice of periods becomes standard. The moment image is
a convex wedge we will denote by \(\pi(n,a)\):

\begin{center}
\begin{tikzpicture}
\filldraw[draw=black,thick,->,fill=lightgray] (0,2) -- (0,0) -- (5,2);
\node at (1,1) {\(\pi(n,a)\)};
\node at (5,2) [right] {\((n,a)\)};

\end{tikzpicture}
\end{center}
\end{Example}
\begin{Remark}
In this example, we made a \(\QQ\)-affine change of Hamiltonians to
change the period lattice. Note that \(\ZZ\)-affine changes of
Hamiltonians leave the period lattice unchanged. In fact, if two
toric manifolds have moment images which are related by a
\(\ZZ\)-affine change of coordinates then they are equivariantly
symplectomorphic.

\end{Remark}
\begin{Remark}
In this example, we can see the contact lens space which is the link
of the \(\frac{1}{n}(1,a)\)-singularity. It is simply the preimage
of a horizontal line segment \(\sigma\) running across
\(\pi(n,a)\). In 3-dimensional topology, lens spaces are defined to
be those orientable 3-manifolds containing a torus whose complement
comprises two solid tori. In our toric picture, the preimage of the
midpoint of \(\sigma\) is a Lagrangian torus. The preimages of the
left- and right-hand segments in \(\sigma\) are solid tori, so we
see that this is indeed a lens space (you have to work a little
harder to see that it is \(L(n,a)\)).

\end{Remark}
\subsubsection{Lagrangian torus fibrations}

\begin{Definition}
Let \(X\) be a \(2n\)-dimensional symplectic manifold and \(B\) be
an \(n\)-dimensional stratified space. A Lagrangian torus fibration
is a proper continuous map \(F\colon X\to B\) such that:
\begin{itemize}
\item \(F\) is a smooth submersion over the top stratum of \(B\), with
Lagrangian fibres;
\item the fibres over other points are themselves stratified spaces,
with isotropic strata.

\end{itemize}
\end{Definition}
\begin{Remark}
The Arnold-Liouville theorem implies that the fibres over the top
stratum are tori.

\end{Remark}
\begin{Example}
If \(X\) is toric then we can take \(B\) to be the moment image and
\(F\) to be the moment map; this gives a Lagrangian torus
fibration. The moment image is a convex polytope, so it is
stratified by its faces. The top stratum is the interior: the fibres
over this stratum are Lagrangian tori. Over points in faces of
dimension \(k\), the fibres are isotropic tori of dimension \(k\).

\end{Example}
\begin{Example}
Consider \[B_{d,p,q}=\{(u,v,w)\in\CC^3\ :\
uv=(w^p-1)\cdots(w^p-d)\}/G'\] and the holomorphic map
\(B_{d,p,q}\to\CC\) given by \((u,v,w)\mapsto w^p\). Consider the
functions \(H_1=\frac{1}{2}|w^p|^2\) and
\(H_2=\frac{1}{2}(|u|^2-|v|^2)\). The simultaneous level sets
\[H_1=h_1,\quad H_2=h_2\] are tori whenever \(h_1\neq 0\) and
\((h_1,h_2)\neq\left(\frac{1}{2}n^2,0\right)\) for
\(n\in\{1,\ldots,d\}\). The fibres with \(h_1=0\) are isotropic
circles, and the fibres with
\((h_1,h_2)=\left(\frac{1}{2}n^2,0\right)\) are pinched tori. These
pinched tori are stratified by isotropic strata: they are made up of
a pinch point (0-stratum) and a Lagrangian cylinder (2-stratum).

\begin{center}
\begin{tikzpicture}
\filldraw[fill=lightgray,opacity=0.5,draw=none] (-5,-7) -- (4,-7) -- (5,-2.5) -- (-4,-2.5) -- cycle;
\conic{(-4,0)};
\conic{(-2,0)};
\conic{(0,0)};
\conic{(2,0)};
\nodalconic{(4,0)};
\circdash{(0,0)}{0.3}{dg,very thick}{0};
\circdash{(0,0.2)}{0.35}{dg,very thick}{0};
\circdash{(0,0.4)}{0.4}{dg,very thick}{0};
\circdash{(0,-0.2)}{0.35}{dg,very thick}{0};
\circdash{(0,-0.4)}{0.4}{dg,very thick}{0};
\circdash{(2,0)}{0.3}{dg,very thick}{0};
\circdash{(-2,0)}{0.3}{dg,very thick}{0};
\draw[thick,dg] (0,0) circle [x radius=2.3,y radius=1.2];
\draw[thick,dg] (0,0) circle [x radius=1.7,y radius=0.85];
\circdash{(2,0)}{0.3}{dg,very thick}{0};
\circdash{(-4,0)}{0.3}{dg,very thick}{0};
\draw[thick,dg] (-0.15,0) circle [x radius=4.15,y radius=2.4];
\draw[thick,dg] (0.15,0) circle [x radius=3.85,y radius=1.7];
\draw[very thick,dg] (0,-4.5) circle [x radius=2,y radius=0.85];
\draw[very thick,dg] (0,-4.5) circle [x radius=4,y radius=1.7];
\draw[thick] (0,-1) -- (0,-4.5) node {\(\bullet\)} node [below] {\(0\)};
\draw[thick] (4,-1) -- (4,-4.5) node {\(\bullet\)} node [below right] {\(1\)};
\draw[<-,thick] (0.4,0.2) -- (3,3) node [align=center,above] {circle fibres\\ at \(H_1=0\)};
\draw[<-,thick] (-2,2) -- (-3,3) node [align=center,above] {pinched torus fibre\\ \(H_1=1\), \(H_2=0\)};
\draw[<-,thick] (0,1) -- (0,3) node [above] {torus fibre};
\draw[->,very thick] (0,-4.5) -- (4,-6.5) node [above] {\(H_1\)};
\draw[->,very thick] (-5,-1) -- (-5,0) node {\(\bullet\)} -- (-5,1) node [left] {\(H_2\)};

\end{tikzpicture}
\end{center}
\end{Example}
Here we draw (in the case \(d=3\)) the image of the map
\((H_1,H_2)\colon B_{d,p,q}\to\RR^2\), denoting the pinched torus
fibres with a cross:

\begin{center}
\begin{tikzpicture}
\filldraw[lightgray] (0,-1) -- (0,1) -- (5,1) -- (5,-1) -- cycle;
\draw[thick] (0,-1) -- (0,1);
\node at (0.5,0) {\(\bm{\times}\)};
\node at (2,0) {\(\bm{\times}\)};
\node at (9/2,0) {\(\bm{\times}\)};

\end{tikzpicture}
\end{center}
(The picture should extend infinitely up, down and right.)

\begin{Remark}\label{rmk:almosttoric}
\begin{enumerate}
\item The fibres over the vertical boundary are circles; the preimage of
the vertical boundary is the conic at \(H_1=0\). The local model
for these kinds of singular fibre is precisely that of the
boundary of the moment polygon in a toric surface.

\item The singularities in the pinched torus fibres are called
``focus-focus singularities''. Lagrangian torus fibrations with
singularities modelled on focus-focus points and toric strata are
called {\em almost toric fibrations}.

\item The precise horizontal position of the crosses is not really very
important: it can be changed by varying the equation for the
smoothing.

\item The Lagrangian vanishing cycle we encountered last time is visible
in this picture: it projects to the green line in the figure
below.

\end{enumerate}
\end{Remark}
\begin{center}
\begin{tikzpicture}
\filldraw[lightgray] (0,-1) -- (0,1) -- (5,1) -- (5,-1) -- cycle;
\draw[thick] (0,-1) -- (0,1);
\node at (0.5,0) {\(\bm{\times}\)};
\node at (2,0) {\(\bm{\times}\)};
\node at (9/2,0) {\(\bm{\times}\)};
\draw[thick,dg] (0,0) -- (9/2,0);

\end{tikzpicture}
\end{center}
\subsubsection{Action coordinates}

This is not quite the end of the story. In toric geometry, we require
that the period lattice be \((2\pi\ZZ)^n\); without this condition, it
is not possible to reconstruct the toric variety uniquely from the
polytope. Indeed, as we saw in \cref{exm:quotsing}, if we don't impose
this condition, then all the singularities \(\frac{1}{n}(1,a)\) admit
``moment maps'' whose images are the nonnegative quadrant. In this
case, we needed to correct our naive moment image by a \(\QQ\)-affine
transformation.

In our current example (\(B_{d,p,q}\)), the problem is even more
severe: the Hamiltonian functions \(H_1,H_2\) define an
\(\RR^2\)-action, but the flow of \(H_1\) is not periodic: the
focus-focus fibres each comprise two orbits of the \(\RR^2\)-action, a
fixed point (the node) and a Lagrangian cylinder, on which the flow of
\(H_1\) acts by translation (hence without periods). Even for the
generic fibre, on which the flow of \(H_1\) is periodic, its period
varies from point to point.

We can fix this locally by using Hamiltonians of the form
\[(G(H_1,H_2),H_2).\] Near a smooth torus fibre, it is always possible
to find a \(G\) such that these modified Hamiltonians generate a torus
action with standard period lattice; the modified Hamiltonians are
called {\em action coordinates}. This determines \(G(H_1,H_2)\)
uniquely up to adding on an integral multiple of \(H_2\). It is
usually nontrivial to find \(G\) explicitly (even in simple
Hamiltonian systems like the pendulum, its expression turns out to
involve transcendental functions like elliptic integrals).

Globally, however, there are problems. For a start, \(G\) will always
be singular at the focus-focus fibres (there is no way to make a
non-periodic flow periodic). Cut out these focus-focus fibres and fix
our favourite smooth fibre. Solve for \(G\) locally near that fibre,
and prolong the solution to find \(G\) everywhere away from the
focus-focus fibres. If you prolong around a loop in the base which
encloses some focus-focus fibres, then there is no guarantee that
\(G\) remains single-valued; the difference in values will be an
integral multiple of \(H_2\). In fact, a local computation shows that
if you go once anticlockwise around a focus-focus fibre then the
Hamiltonians \((G(H_1,H_2),H_2)\) become \((G(H_1,H_2)-H_2,H_2)\). In
other words, the {\em monodromy} for a focus-focus fibre is
\(M=\begin{pmatrix} 1 & -1 \\ 0 & 1\end{pmatrix}\). See the example of
monodromy for the spherical pendulum in \cite{Duistermaat} (he obtains
the transpose-inverse of our monodromy, because he is interested in
how monodromy acts on the homology of the torus fibres).

To combat this ambiguity, we will make branch cuts in the base of our
torus fibration so as to remove the focus-focus fibres and ensure that
the complement of the branch cuts is simply-connected. Then the action
coordinates are well-defined on this domain. Without further
justification, I will draw for you the result:

\begin{center}
\begin{tikzpicture}
\filldraw[lightgray] (-0.3,-1) -- (0.3,1) -- (5,1) -- (5,-1) -- cycle;
\draw[->,thick] (-0.3,-1) -- (0.3,1);
\node at (0.3,1) [left] {\(\begin{pmatrix}\ell \\p\end{pmatrix}\)};
\node at (0.5,0) {\(\bm{\times}\)};
\node at (2,0) {\(\bm{\times}\)};
\node at (9/2,0) {\(\bm{\times}\)};
\draw[dashed,thick] (0.5,0) -- (5,0);

\end{tikzpicture}
\end{center}
The slanted edge points in the direction \((\ell,p)\) where
\(\ell\in\{0,\ldots,p-1\}\) is the multiplicative inverse of \(q\) mod
\(p\). The branch cuts are drawn as dotted lines. Changing coordinates
using the \(\ZZ\)-affine transformation \(N=\begin{pmatrix} p & -\ell
\\ q & -k\end{pmatrix}\) (where \(kp+q\ell=1\)) we get an alternative
picture:

\begin{center}
\begin{tikzpicture}
\filldraw[lightgray] (0,-1) -- (0,1) -- (5,1) -- (5,-1) -- cycle;
\draw[->,thick] (0,-1) -- (0,1);
\node at (0.5,0.1) {\(\bm{\times}\)};
\node at (2,0.4) {\(\bm{\times}\)};
\node at (9/2,0.9) {\(\bm{\times}\)};
\draw[->,dashed,thick] (0.5,0.1) -- (5,1) node [right] {\(\begin{pmatrix} p \\ q\end{pmatrix}\)};

\end{tikzpicture}
\end{center}
Here are some remarks:
\begin{itemize}
\item We have made our branch cuts in directions collinear with the
eigenvector of the monodromy. If we had chosen another direction,
our diagram would have been a sector with two dotted lines, related
by the monodromy, and whenever you leave the sector by crossing one
dotted line, you reappear at the other one with your tangent vector
twisted by the monodromy matrix.
\item In the latest picture, the monodromy matrix \(M\) has changed to its
conjugate \(NMN^{-1}=\begin{pmatrix}1+pq & -p^2 \\ q^2 &
1-pq\end{pmatrix}\) because we changed coordinates using \(N\). It
is easy to check that the branch cuts (now in direction \((p,q)\))
are still pointing in the eigendirection of this matrix.

\end{itemize}
\subsubsection{Mutation}

One final modification will let us compare the toric picture of
\(\frac{1}{dp^2}(1,dpq-1)\) and the almost toric picture of its
smoothing directly. We simply use a different set of branch cuts,
pointing in the negative eigendirection. These branch cuts now
intersect the toric boundary, which means that the vertical straight
line in our earlier pictures appears bent in the new
pictures. Nonetheless, it is still a straight line because when we
cross the branch cut we must apply the monodromy to our tangent
vectors. Because we are crossing \(d\) branch cuts, each with
monodromy \(\begin{pmatrix} 1+pq & -p^2 \\ q^2 & 1-pq\end{pmatrix}\),
the tangent vector \((0,-1)\) of our vertical line (travelling
downwards) is sent to \((dp^2,dpq-1)\).

Here is the result; I will refer to this almost toric diagram as
\(\tilde{\pi}(dp^2,dpq-1)\).

\begin{center}
\begin{tikzpicture}
\filldraw[lightgray] (0,2) -- (0,0) node[black] {\(\circ\)} -- (10,0.8) -- (10,2) -- cycle;
\draw[->,thick] (0,2) -- (0,0) -- (10,0.8) node [below] {\(\begin{pmatrix} dp^2 \\ dpq-1\end{pmatrix}\)};
\node at (1,0.2) {\(\bm{\times}\)};
\node at (4,0.8) {\(\bm{\times}\)};
\node at (9,1.8) {\(\bm{\times}\)};
\draw[dashed,thick] (0,0) -- (9,1.8) node [midway,above] {\(\begin{pmatrix} p \\ q\end{pmatrix}\)};
\node at (1.5,1) {\(\tilde{\pi}(dp^2,dpq-1)\)};

\end{tikzpicture}
\end{center}
If we were to erase the branch cuts and decorations indicating the
focus-focus fibres, we would obtain the standard moment polygon for
the toric \(\frac{1}{dp^2}(1,dpq-1)\) singularity.

\begin{Remark}
This trick of changing branch cut by 180 degrees is called {\em
mutation}. To perform a mutation, we slice our picture in two using
the branch cut, we apply the monodromy to one of the two halves and
then reattach them. This will be an important operation in what
follows. Note that mutation does not change the symplectic manifold
or even the torus fibration, it only changes the choice of branch
cut used in producing the action coordinates.

\end{Remark}
\subsection{Smoothings of other cyclic quotient singularities}
\label{sct:ksb}

We can now draw almost toric pictures for smoothings of other cyclic
quotient singularities.

\subsubsection{Koll\'{a}r--Shepherd-Barron classifications}

The smoothings of cyclic quotient singularities were studied by
Looijenga and Wahl \cite{LooijengaWahl} and by Koll\'{a}r and
Shepherd-Barron \cite{KSB} and can be completely
classified. Here is a sketch of the KSB approach to classifying
smoothings:
\begin{itemize}
\item Take a smoothing \(\mathcal{X}\to\CC\) where \(X_0=\CC^2/G\)
is the singularity of type \(\frac{1}{n}(1,a)\). Write
\(x_0\in X_0\) for the singular point.
\item The total space \(\mathcal{X}\) is usually singular at \(x_0\). By
the semistable reduction theorem, you can perform base-change and
birationally modify \(\mathcal{X}\) to get a new smoothing
\(\mathcal{X}'\to\CC\) with smooth total space, and to make \(X'_0\)
into a reduced simple normal crossing divisor.
\item Using the semistable minimal model program, you can further modify
\(\mathcal{X}\) to get a smoothing \(\mathcal{X}''\to\CC\) so that
any curve \(C\) contracted by the birational map
\(\mathcal{X}''\to\mathcal{X}\) satisfies \(K_{\mathcal{X}''}\cdot
C>0\). The cost of this step is that you may introduce terminal
singularities in the total space.
\item The cyclic quotient singularities whose smoothings have at
worst isolated terminal singularities are classified by
Looijenga and Wahl \cite{LooijengaWahl} and Koll\'{a}r and
Shepherd-Barron \cite{KSB}; they are precisely the
T-singularities we have already discussed.
\end{itemize}
Therefore, after all of this messing around, we have a central fibre
\(X''_0\) which is a partial resolution of the original \(X_0\),
obtained by blowing up some sequence of points. The singularities of
\(X''_0\) are T-singularities and the canonical class evaluates
nonnegatively on curves. We finally take the canonical model; this may
introduce some ADE singularities, but ensures that the canonical class
evaluates positively on all curves.

\begin{Definition}[P-resolution]\label{dfn:pres}
A partial resolution \(f\colon Y_0\to X_0\) of a cyclic quotient
singularity is called a {\em P-resolution} if:
\begin{itemize}
\item it has only T-singularities,
\item \(K_{Y_0}\) evaluates positively on all exceptional curves of
\(f\).

\end{itemize}
\end{Definition}
\subsubsection{Toric P-resolutions and their almost toric smoothings}

In terms of toric pictures, partial resolution is accomplished by
making truncations to the moment polygon (just as blowing up a smooth
toric fixed point amounts to chopping off a corner of the moment
polytope). It turns out that there are only a finite number of
truncations giving a P-resolution (for an explanation of this, and how
to find the P-resolutions, see {\cite[Section 3]{KSB}}; I have given a
brief, pragmatic account in \cref{app:pres}).

\begin{Example}[{\cite[Example 3.15]{KSB}}]\label{exm:KSBpres}
There are three P-resolutions of the singularity
\(\frac{1}{19}(1,7)\). In each case, we draw the toric model and
indicate the singular points as red dots. I have also written the
type of the singularity (not to be confused with the slope-vectors
labelling the edges).

\begin{center}
\begin{tikzpicture}
\filldraw[lightgray] (0,2) -- (0,0) node[black] {\(\bullet\)} -- (1,0) node[black] {\(\bullet\)} -- (4,1) node[red] {\(\bullet\)} -- (4+19/7,2) -- cycle;
\draw[thick] (0,2) -- (0,0) -- (1,0) -- (4,1) -- (4+19/7,2);
\node at (2.5,0.5) [below] {\(\begin{pmatrix}3 \\ 1\end{pmatrix}\)};
\node at (4+19/14,1.5) [below] {\(\begin{pmatrix}19 \\ 7\end{pmatrix}\)};
\node at (4,1) [below] {\(\frac{1}{2}(1,1)\)};
\end{tikzpicture}
\end{center}
\begin{center}
\begin{tikzpicture}
\filldraw[lightgray] (0,2) -- (0,0) node[black] {\(\bullet\)} -- (1,0) node[red] {\(\bullet\)} -- (1+11/4,1) node[black] {\(\bullet\)} -- (1+11/4+19/7,2) -- cycle;
\draw[thick] (0,2) -- (0,0) -- (1,0) -- (1+11/4,1) -- (1+11/4+19/7,2);
\node at (1+11/8,0.5) [below] {\(\begin{pmatrix}11 \\ 4\end{pmatrix}\)};
\node at (1+11/4+19/14,1.5) [below] {\(\begin{pmatrix}19 \\ 7\end{pmatrix}\)};
\node at (1,0) [below] {\(\frac{1}{4}(1,1)\)};
\end{tikzpicture}
\end{center}
\begin{center}
\begin{tikzpicture}
\filldraw[lightgray] (0,2) -- (0,0) node[red] {\(\bullet\)} -- (4,1) node[red] {\(\bullet\)} -- (4+19/7,2) -- cycle;
\draw[thick] (0,2) -- (0,0) -- (4,1) -- (4+19/7,2);
\node at (2,0.5) [below] {\(\begin{pmatrix}4 \\ 1\end{pmatrix}\)};
\node at (4+19/14,1.5) [below] {\(\begin{pmatrix}19 \\ 7\end{pmatrix}\)};
\node at (0,0) [below] {\(\frac{1}{4}(1,1)\)};
\node at (4,1) [below] {\(\frac{1}{9}(1,2)\)};

\end{tikzpicture}
\end{center}
Each singular point is a T-singularity and can be smoothed by
implanting the local almost toric model at the relevant vertex. We
must be careful to use a \(\ZZ\)-affine transformation to transfer
the almost toric diagram \(\tilde{\pi}(dp^2,dpq-1)\) to the relevant
vertex.

In the first picture, for example, we use \(A=\begin{pmatrix}11 & -3
\\ 4 & -1\end{pmatrix}\) to put the vector \((0,1)\) into the
\((-3,-1)\)-direction and \((2,1)\) into the
\((19,7)\)-direction. The branch cuts now must point in the
\(A\begin{pmatrix} 1 \\ 1\end{pmatrix}=\begin{pmatrix} 8
\\ 3\end{pmatrix}\)-direction. Here are the resulting diagrams (not
all are drawn to scale because some branch cuts are almost parallel
to the boundary):
\begin{center}
\begin{tikzpicture}
\filldraw[lightgray] (0,2) -- (0,0) node[black] {\(\bullet\)} -- (1,0) node[black] {\(\bullet\)} -- (4,1) node[black] {\(\circ\)} -- (4+19/7,2) -- cycle;
\draw[thick] (0,2) -- (0,0) -- (1,0) -- (4,1) -- (4+19/7,2);
\node at (2.5,0.5) [below] {\(\begin{pmatrix}3 \\ 1\end{pmatrix}\)};
\node at (4+19/14,1.5) [below] {\(\begin{pmatrix}19 \\ 7\end{pmatrix}\)};
\draw[dashed] (4,1) -- (4+7/6,1+1/2) node {\(\bm{\times}\)} -- (4+7/3,1+1) node {\(\bm{\times}\)};
\node at (4+7/6,1+1/2) [above] {\(\begin{pmatrix} 8 \\ 3\end{pmatrix}\)};
\end{tikzpicture}
\end{center}
\begin{center}
\begin{tikzpicture}
\filldraw[lightgray] (0,2) -- (0,0) node[black] {\(\bullet\)} -- (1,0) node[black] {\(\circ\)} -- (1+11/4,1) node[black] {\(\bullet\)} -- (1+11/4+19/7,2) -- cycle;
\draw[thick] (0,2) -- (0,0) -- (1,0) -- (1+11/4,1) -- (1+11/4+19/7,2);
\node at (1+11/8,0.5) [below] {\(\begin{pmatrix}11 \\ 4\end{pmatrix}\)};
\node at (1+11/4+19/14,1.5) [below] {\(\begin{pmatrix}19 \\ 7\end{pmatrix}\)};
\draw[dashed] (1,0) -- (1+15/8,3/4) node {\(\bm{\times}\)};
\node at (1+15/16,3/8) [above] {\(\begin{pmatrix} 5 \\ 2\end{pmatrix}\)};
\end{tikzpicture}
\end{center}
\begin{center}
\begin{tikzpicture}
\filldraw[lightgray] (0,2) -- (0,0) node[black] {\(\circ\)} -- (4,1) node[black] {\(\circ\)} -- (4+19/7,2) -- cycle;
\draw[thick] (0,2) -- (0,0) -- (4,1) -- (4+19/7,2);
\node at (2,0.5) [below] {\(\begin{pmatrix}4 \\ 1\end{pmatrix}\)};
\node at (4+19/14,1.5) [below] {\(\begin{pmatrix}19 \\ 7\end{pmatrix}\)};
\draw[dashed] (0,0) -- (2,1) node {\(\bm{\times}\)};
\node at (1,0.5) [above] {\(\begin{pmatrix} 2 \\ 1\end{pmatrix}\)};
\draw[dashed] (4,1) -- (4+5/2,2) node {\(\bm{\times}\)};
\node at (4+5/4,1.5) [above] {\(\begin{pmatrix} 5 \\ 2\end{pmatrix}\)};

\end{tikzpicture}
\end{center}
(I've drawn the smoothed vertices as open circles, to distinguish
them from the actual vertices: remember that the `broken edge'
passing through a smoothed vertex is really a straight line). In
particular, we see that:
\begin{itemize}
\item the first example contains a symplectically embedded \(B_{2,1,1}\)
(neighbourhood of a Lagrangian sphere),
\item the second example contains a symplectically embedded \(B_{2,1}\)
(neighbourhood of a Lagrangian \((2,1)\)-pinwheel, i.e.
\(\rp{2}\).
\item the third example contains a symplectically embedded \(B_{2,1}\)
and a symplectically embedded \(B_{3,1}\) (neighbourhood of a
Lagrangian \(\rp{2}\) and a \((3,1)\)-pinwheel).
\end{itemize}
These manifest themselves in our almost toric pictures as preimages
under the torus fibration of neighbourhoods of the dashed lines.

\end{Example}
\begin{Remark}
Consider the lens space \(L(n,a)\) equipped with the contact
structure it inherits as the link of \(\frac{1}{n}(1,a)\). The
minimal symplectic fillings of this contact manifold were classified
up to diffeomorphism by Lisca \cite{Lisca}. It was proved in
\cite{NPP} that his classification coincides with that of
Koll\'{a}r--Shepherd-Barron; that is any minimal symplectic filling
is diffeomorphic to a smoothing of the corresponding cyclic quotient
singularity and smoothings coming from different P-resolutions are
not diffeomorphic (it seems likely that this classification holds up
to the stronger equivalence relation given by symplectic
deformation/diffeomorphism, but I believe this statement is still
open\footnote{Added later: At the KIAS conference, I was made
aware of the work of Bhupal and Ono \cite{BhupalOno} who prove
this stronger result.}).

\end{Remark}
\newpage

\section{Lecture 3}

\subsection{Applications}

Next, I want to review some of the theorems you can prove using
symplectic techniques to put constraints on singularity formation.

\begin{Theorem}[Evans-Smith \cite{ES1}]\label{thm:cp2es}
Suppose that we have \(N\) disjointly and symplectically embedded
submanifolds \(U_1,\ldots,U_N\subset\cp{2}\) where each \(U_i\) is a
copy of \(B_{p_i,q_i}\). Then \(N\leq 3\) and
\(\{p_1,p_2,\ldots,p_N\}\) is a subset of a Markov triple, that is a
solution \(\{a,b,c\}\) to the Diophantine equation
\(a^2+b^2+c^2=3abc\) (moreover, the \(q\)s are determined by the
\(p\)s in the triple).

\end{Theorem}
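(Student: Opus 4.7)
The plan is to apply pseudoholomorphic curve techniques to the pencil of lines in \(\cp{2}\), combined with a neck-stretching argument along the contact boundaries of the \(U_i\), and to read off the Markov equation from a bookkeeping of Chern and area invariants of the resulting SFT buildings.

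First, I would shrink each \(U_i\) to a Weinstein/Stein neighbourhood of the Lagrangian pinwheel core \(P_i\subset B_{p_i,q_i}\), so that \(\partial U_i\) is contactomorphic to the standard \(L(p_i^2,p_iq_i-1)\) (this is harmless by the completion remark following \cref{dfn:milnorfibre}). I would then pick an \(\omega_{FS}\)-compatible almost complex structure \(J\) that is cylindrical on a collar of each \(\partial U_i\) and agrees on a neighbourhood of each \(P_i\) with the integrable model coming from the almost toric description of \(B_{p_i,q_i}\) of Lecture 2. Choose a base point \(x_\infty\in\cp{2}\setminus\bigcup_i U_i\) and consider the pencil of \(J\)-holomorphic lines through \(x_\infty\). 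By automatic regularity in dimension four and positivity of intersections, this pencil foliates \(\cp{2}\setminus\{x_\infty\}\).

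Next I would stretch the neck along each \(\partial U_i\), producing a family \(J_T\) of almost complex structures. SFT compactness applied to the pencil gives, in the limit, a holomorphic building whose top level lives in the completion \(\hat W\) of \(W:=\cp{2}\setminus\bigsqcup_i U_i\) and whose lower levels live in the completions \(\hat B_{p_i,q_i}\), matched along Reeb orbits of the standard contact forms on the lens spaces \(L(p_i^2,p_iq_i-1)\). The crucial step is the classification of the lower-level pieces. Using the ECH/Conley--Zehnder index formulas, Siefring's intersection theory, and the observation that \(\hat B_{p_i,q_i}\) admits a rational pseudoholomorphic foliation inherited from its almost toric description, I would show that each lower-level component in \(\hat B_{p_i,q_i}\) is a punctured rational curve asymptotic to short Reeb orbits in a restricted class determined by \((p_i,q_i)\), and that a generic line of the pencil contributes a definite multiplicity \(m_i\) of such pieces in \(U_i\).

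Third, I would total the contributions. The identities \(c_1(\cp{2})\cdot[\text{line}]=3\) and \([\text{line}]^2=1\), combined with area and Chern-number bookkeeping across the levels of the building (each T-singularity contributes a definite deficit controlled by \(p_i\) and \(q_i\) via the index of the asymptotic orbits), force the relation \(\sum_i p_i^2=3\prod_i p_i\). Elementary number theory on the Markov equation \(a^2+b^2+c^2=3abc\) then gives \(N\le 3\) and forces \(\{p_1,\dots,p_N\}\) to be a subset of a Markov triple; the \(q_i\) are pinned down by the requirement that the matching Reeb orbits close up and the holomorphic pieces are embedded, which is precisely the condition that each \(B_{p_i,q_i}\) be the \(\QQ\)-Gorenstein smoothing of \(\frac{1}{p_i^2}(1,p_iq_i-1)\) compatible with the Hacking--Prokhorov classification of degenerations of \(\cp{2}\) to weighted projective planes \(\mathbf{P}(a^2,b^2,c^2)\).

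The hard part is the SFT analysis in the middle step: establishing that only a very restricted class of buildings can occur in the limit, pinning down their Fredholm indices and asymptotic orbits, and ruling out multiply covered or degenerate configurations. This requires delicate transversality and automatic-regularity arguments for punctured spheres in \(\hat B_{p_i,q_i}\), together with Siefring--Wendl relative intersection positivity to prove that distinct pieces in the same \(\hat B_{p_i,q_i}\) do not interfere. Once that classification is in place, the passage to the Markov equation is a clean (if lengthy) counting argument using only \(c_1\) and area.
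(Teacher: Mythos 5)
First, a point of order: these notes do not actually prove \cref{thm:cp2es}. It is quoted from \cite{ES1}, and shortly after stating it (together with \cref{thm:gtes}) the text says explicitly that these proofs will not be discussed because they require pseudoholomorphic-curve and Seiberg--Witten techniques beyond the scope of the lectures. So there is no in-text argument to compare yours against; what follows is an assessment of your outline on its own terms. The strategy you describe --- Weinstein neighbourhoods of the pinwheel cores, neck-stretching along the contact lens spaces \(L(p_i^2,p_iq_i-1)\), SFT limits of a pencil of degree-one curves, and Chern/area bookkeeping over the resulting buildings --- is indeed, in outline, the strategy of \cite{ES1}, so you have identified the right tools and the right place to deploy them.

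That said, as written this is a plan rather than a proof, and it is thinnest exactly where the content lies. (i) The ``crucial step'' of classifying the lower-level components in the completions of the \(B_{p_i,q_i}\) is asserted, not carried out: one needs the Conley--Zehnder/grading data for the Reeb orbits of \(L(p^2,pq-1)\), control of multiple covers and of negative-index components inside buildings, and the relative intersection theory you invoke by name; none of this is sketched even schematically, and it constitutes the bulk of \cite{ES1}. (ii) The final numerology is wrong as stated. The identity \(\sum_i p_i^2=3\prod_i p_i\) cannot be the output of the count when \(N<3\): a single Lagrangian \(\rp{2}\) in \(\cp{2}\) has a neighbourhood of the form \(B_{2,1}\), so \(N=1\), \(p_1=2\) is realised, yet \(4\neq 6\). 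The Markov identity holds only for the \emph{completed} triple \(\{1,1,2\}\), i.e.\ after accounting for members of the triple that correspond to no embedded \(U_i\) (the weight-one entries correspond to smooth points/balls of the limiting object, not to hypothesised submanifolds). Your bookkeeping must therefore produce not a closed identity in the given \(p_i\) alone but a statement that the configuration extends to one on the Hacking--Prokhorov list \cite{HP1,HP2}, with the missing weights extracted from the analysis of the complement \(W\); likewise, pinning down the \(q_i\) requires more than the remark that ``the Reeb orbits close up''. Until (i) is supplied and (ii) is corrected, the argument does not yet establish the theorem.
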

Note that \(d=1\) for any \(B_{d,p,q}\subset\cp{2}\) because
\(\cp{2}\) contains no Lagrangian spheres (just for homology reasons).

\begin{Remark}
This theorem is a symplectic geometer's translation of the
following, earlier theorem of Hacking and Prokhorov, which inspired
it:

\end{Remark}
\begin{Theorem}[Hacking-Prokhorov \cite{HP1,HP2}]\label{thm:cp2hp}
Suppose \(\mathcal{X}\to S\) is a \(\QQ\)-Gorenstein degeneration
whose general fibre is \(\cp{2}\) and whose singular fibre \(X_0\)
has ample anticanonical bundle and at worst isolated quotient
singularities. Then \(X_0\) is a (\(\QQ\)-Gorenstein deformation of)
a weighted projective space \(P(p_1^2,p_2^2,p_3^2)\) where
\(\{p_1,p_2,p_3\}\) is a Markov triple. (These weighted projective
spaces have precisely three singularities of type
\(\frac{1}{p_i^2}(1,p_iq_i-1)\)).

\end{Theorem}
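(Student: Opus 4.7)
The plan is to translate the algebraic hypothesis into a symplectic statement, apply \cref{thm:cp2es}, and then supplement with the algebraic classification of log del Pezzo surfaces to obtain the global identification.

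First, since $\mathcal{X}\to S$ is $\mathbf{Q}$-Gorenstein, each singularity of $X_0$ admits a local $\mathbf{Q}$-Gorenstein smoothing, so by \cref{rmk:tsing} each is a T-singularity of type $\frac{1}{d_ip_i^2}(1,d_ip_iq_i-1)$. Choosing a relatively ample polarisation on $\mathcal{X}$ and applying the parallel transport construction of Lecture 1 at each singularity separately, we obtain a finite disjoint collection of symplectically embedded Milnor fibres $B_{d_i,p_i,q_i}\subset X_s=\cp{2}$ for $s$ close to $0$.

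Next I would rule out the chain-of-spheres components. Any Lagrangian 2-sphere in a symplectic four-manifold has self-intersection $-2$, but $H_2(\cp{2};\mathbf{Z})=\mathbf{Z}[H]$ contains only classes of non-negative self-intersection, so $\cp{2}$ contains no Lagrangian sphere. Since $B_{d_i,p_i,q_i}$ deformation retracts onto a chain of $d_i-1$ Lagrangian spheres attached to a pinwheel, we must have $d_i=1$ for every $i$, so every Milnor fibre is of the form $B_{p_i,q_i}$. Applying \cref{thm:cp2es} to this collection yields $N\le 3$, that $\{p_1,\ldots,p_N\}$ is a subset of a Markov triple $\{p_1,p_2,p_3\}$, and that each $q_i$ is determined by the corresponding $p_i$.

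Finally I would upgrade this counting statement to the global identification of $X_0$ as a $\mathbf{Q}$-Gorenstein deformation of $P(p_1^2,p_2^2,p_3^2)$. The ingredients available are ampleness of $-K_{X_0}$, the known T-singularity types $\frac{1}{p_i^2}(1,p_iq_i-1)$, and the Markov identity $p_1^2+p_2^2+p_3^2=3p_1p_2p_3$; using the orbifold Noether formula one computes $K_{X_0}^2$ and matches it with the canonical volume of the weighted projective space, and then the classification of log del Pezzo surfaces of Picard rank one with prescribed cyclic quotient singularities pins down $X_0$ in its $\mathbf{Q}$-Gorenstein deformation class. This last step is where the real obstacle lies: symplectic topology controls only a neighbourhood of each singularity and delivers the Markov identity, but the global identification as a weighted projective space is purely algebraic and depends on the surface classification, which is precisely the content of Hacking--Prokhorov's original argument.
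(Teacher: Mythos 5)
This theorem is not proved in the paper at all: it is quoted from Hacking--Prokhorov as the algebro-geometric antecedent that \emph{inspired} \cref{thm:cp2es}, and the author explicitly declines to discuss its proof. Hacking and Prokhorov's argument is purely algebro-geometric (it runs the minimal model program on the threefold total space $\mathcal{X}$ and classifies the possible central fibres directly), so your proposal, which routes through symplectic topology of the smooth fibre, is a genuinely different strategy. The first part of it is sound and matches remarks the paper itself makes: $\QQ$-Gorenstein smoothability forces T-singularities by \cref{rmk:tsing}, and the absence of Lagrangian spheres in $\cp{2}$ (for homological reasons) forces $d_i=1$, after which \cref{thm:cp2es} delivers $N\le 3$ and the Markov constraint on the $p_i$. (Two small caveats: with several singularities the parallel transport map of Lecture 1 needs the \L{}ojasiewicz-type argument the paper alludes to, and you should also dispose of non-cyclic quotient singularities, though these again produce Lagrangian spheres.)

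The genuine gap is the final step, and it is not a technicality you can outsource. Everything the symplectic method sees lives in the smooth fibre $X_s\cong\cp{2}$: it controls the number and local analytic types of the singularities of $X_0$, but it carries no information about $X_0$ as a global variety. The conclusion of the theorem --- that $X_0$ is a $\QQ$-Gorenstein deformation of $P(p_1^2,p_2^2,p_3^2)$ --- is precisely the global classification of log del Pezzo surfaces of Picard rank one arising as such degenerations, and that classification is the \emph{content} of Hacking--Prokhorov's theorem, not an independently available input; invoking it as a black box makes the argument circular. (Even the Picard rank one / $\QQ$-homology-$\cp{2}$ property of $X_0$ needs an argument, e.g.\ comparing $b_2$ across the degeneration using that the Milnor fibres are rational homology balls.) You diagnose this honestly in your last sentence, but that sentence is an admission that the theorem has not been proved, only that its numerical shadow has been recovered symplectically. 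A correct write-up should either present this as a derivation of the \emph{local} consequences of Hacking--Prokhorov from \cref{thm:cp2es}, or supply the MMP argument for the global identification.
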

\begin{Theorem}[Evans-Smith \cite{ES2}]\label{thm:gtes}
Let \(X\) be a smooth surface of general type with \(p_g>0\)
(\(b^+>1\)) equipped with its canonical symplectic form and
suppose it contains a symplectically embedded copy of
\(B_{p,q}\) for some \(p,q\). Let \(\ell\) be the length of
the continued fraction expansion
\[\frac{p^2}{pq-1}=b_1-\frac{1}{b_2-\frac{1}{\cdots-\frac{1}{b_\ell}}}.\]
Then\footnote{Added later: When you have an exceptional curve
of the first kind, the irreducible components come with
multiplicities so that their sum has square \(-1\). The proof
in Evans--Smith didn't make use of these multiplicities; if
you take them into account then you can probably push the
bound up to the known optimal bound of \(4K^2+1\).} \(\ell\leq
4K^2+7\).

\end{Theorem}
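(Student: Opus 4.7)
The plan is to combine Taubes' equivalence SW\(=\)Gr with a neck-stretching argument along the link of the T-singularity, so as to detect the chain of resolution curves inside a canonical divisor of \(X\) and bound their number against \(K^2\).

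First, since \(p_g>0\) gives \(b^+>1\), the Seiberg-Witten invariant of the canonical \(\mathrm{Spin}^c\) structure on \(X\) is well-defined and nonzero; by Taubes' theorem it equals the Gromov-Taubes invariant of \(K_X\). Consequently, for any generic tame almost complex structure \(J\) compatible with \(\omega\), there exists an embedded (possibly disconnected) \(J\)-holomorphic curve \(C\subset X\) Poincar\'e dual to \(K_X\), counted with appropriate multiplicities.

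Next, I choose \(J\) to be cylindrical in a collar neighbourhood of the contact lens space \(L(p^2,pq-1)=\partial B_{p,q}\) and stretch the neck along it. By SFT compactness, \(C\) degenerates to a holomorphic building whose ``lower'' levels live in the symplectic completion \(\widehat{B}_{p,q}\), whose ``upper'' levels live in the completion of \(X\setminus B_{p,q}\), and whose components are matched along asymptotic Reeb orbits. The contact structure on \(L(p^2,pq-1)\) has Morse-Bott Reeb dynamics whose short orbits (those capped by holomorphic discs in \(\widehat{B}_{p,q}\)) are in natural correspondence with the \(\ell\) exceptional curves \(E_1,\ldots,E_\ell\) (with self-intersections \(-b_1,\ldots,-b_\ell\)) of the minimal resolution of the T-singularity.

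Then, I compactify \(\widehat{B}_{p,q}\) to a closed rational symplectic surface \(\overline{B}_{p,q}\) by attaching the chain \(E_1\cup\cdots\cup E_\ell\), which is visible directly in the almost toric picture of Lecture 2. The lower-level components of the building extend to closed holomorphic curves in \(\overline{B}_{p,q}\), and positivity of intersections combined with the asymptotic matching forces these curves, together with the \(E_i\), to make up a canonical-type divisor with at least \(\ell\) distinct irreducible components. On the other hand, each \(E_i\) is a symplectic \((-b_i)\)-sphere, so adjunction gives \(K_{\overline{B}_{p,q}}\cdot E_i=b_i-2\); combining this with the classical bound on the number of irreducible components of an effective divisor representing \(K\) on a minimal surface of general type (each component contributing at least one unit to \(K^2\) up to an explicit shift) yields \(\ell\leq 4K^2+7\).

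The main obstacle is the second step: in principle an SFT limit can contain multiply covered cylinders, ghost components and branching, and the Morse-Bott nature of the Reeb dynamics on \(L(p^2,pq-1)\) makes the index and action book-keeping delicate. Ensuring that every exceptional \(E_i\) is genuinely detected by the holomorphic building, rather than being hidden by a nongeneric degeneration, requires careful action-energy estimates tailored to the Reeb orbit lengths on \(L(p^2,pq-1)\); this is precisely where the arithmetic of the continued fraction \(p^2/(pq-1)\) enters. Once that is in hand, the final counting step is combinatorial, and the specific constants \(4\) and \(7\) (as opposed to the sharper \(4K^2+1\) mentioned in the footnote) reflect how much slack is lost by ignoring the multiplicities with which irreducible components appear in the Taubes divisor.
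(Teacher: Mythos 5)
First, a point of order: these lecture notes do not prove \cref{thm:gtes} --- it is quoted from \cite{ES2} and the text explicitly declines to discuss the proof --- so I am comparing your sketch with the argument of \cite{ES2}. Your opening move (Taubes' \(SW=Gr\) for the canonical class when \(b^+>1\)) and your instinct that the bound must come from adjunction on the resolution chain \(E_1,\ldots,E_\ell\) are on target. But the central geometric step is wrong as stated: you cannot compactify \(\widehat{B}_{p,q}\) ``by attaching the chain \(E_1\cup\cdots\cup E_\ell\)''. The neighbourhood of that chain in the minimal resolution and \(B_{p,q}\) are both \emph{fillings} of the same contact lens space \(L(p^2,pq-1)\) --- two convex boundaries cannot be glued --- and the concave cap of \(L(p^2,pq-1)\) is a neighbourhood of the \emph{dual} chain, whose self-intersections are not \(-b_i\); so the adjunction numbers \(K\cdot E_i=b_i-2\) you want are simply not available in any compactification of \(\widehat{B}_{p,q}\). (The almost toric picture of Lecture 2 shows the chain in the toric model of the \emph{singular} surface, not as a divisor at infinity of its smoothing.) The move that works, and is the one taken in \cite{ES2}, is to swap fillings inside \(X\): excise \(B_{p,q}\) and glue the resolution neighbourhood into \(X\setminus B_{p,q}\) along the contact boundary. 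This produces a closed symplectic \(Y\) containing the chain honestly, with \(b^+(Y)=b^+(X)>1\) (the chain neighbourhood is negative definite and \(B_{p,q}\) is a rational homology ball) and \(K_Y^2=K_X^2-\ell\). Taubes then applies directly to \(K_Y\) on \(Y\) with a \(J\) making the \(E_i\) holomorphic; no neck-stretch, SFT compactness, or Reeb-orbit analysis is needed, which dissolves the step you yourself flag as the main obstacle. Your claimed ``natural correspondence'' between short Reeb orbits on \(L(p^2,pq-1)\) and the \(E_i\) is also not a fact you can lean on.

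Second, even granting a closed manifold containing the chain, your final counting step is not a proof. There is no ``classical bound'' on the number of irreducible components of an effective canonical divisor of the form you invoke (a canonical divisor on a minimal general type surface can contain many \((-2)\)-curve components, each contributing nothing to \(K\cdot C\)), and the constants \(4\) and \(7\) do not fall out of such a statement. In \cite{ES2} the count exploits the non-minimality of \(Y\): since \(K_Y^2=K_X^2-\ell\) and a minimal symplectic \(4\)-manifold with \(b^+>1\) has \(c_1^2\geq 0\), the manifold \(Y\) must contain at least \(\ell-K_X^2\) exceptional spheres, and positivity of intersections, adjunction, and the arithmetic of the Wahl chain (e.g.\ \(\sum_i(b_i-2)=\ell+1\)) constrain how these spheres can meet the \(E_i\); that bookkeeping is where \(4K^2+7\) comes from, and it is entirely absent from your sketch. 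In short: right toolbox, but the gluing goes in the wrong direction and the quantitative heart of the argument is missing.
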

\begin{Remark}
The canonical symplectic form is obtained as follows. The canonical
bundle of \(X\) is nef and big, and vanishes only along a collection
of embedded rational\(-2\)-curves. The (pluri)canonical map
contracts these \(-2\)-curves, so the canonical model is a surface
with ADE singularities. We can now perform symplectic surgery,
cutting out these singularities and replacing them with copies of
the ADE Milnor fibres. The result is a smooth symplectic manifold
whose symplectic form is {\em negatively monotone}, i.e. it
satisfies \([\omega]=K_X\).

\end{Remark}
\begin{Remark}
This theorem implies the corresponding bound for lengths of
singularities in stable degenerations of general type surfaces (in a
stable degeneration, the central fibre has ample canonical bundle,
which means that the canonical symplectic form makes sense over the
whole family, allowing us to define symplectic parallel
transport). This bound (in fact, a slightly better bound of
\(\ell\leq 4K^2+1\)) was proved independently and simultaneously
using purely algebro-geometric techniques in a paper by Rana and
Urz\'{u}a \cite{RU}.

\end{Remark}
In this lectures, I will not discuss these theorems any further,
because the proofs require techniques of pseudoholomorphic curve
theory and Seiberg-Witten theory which I would not have time to
cover. Instead, I will try to explain the proof of the following
theorem, and what it has to do with algebraic geometry (specifically
the minimal model program).

\begin{Theorem}[Evans-Urz\'{u}a \cite{EU}]\label{thm:eu}
Let \(X\) be the quintic surface. There exists a symplectic form
\(\omega\) on \(X\) which is a deformation of its canonical
symplectic form and such that \((X,\omega)\) contains symplectically
embedded copies of \(B_{p_i,q_i}\) for \(p_i,q_i\) on the following
list: \[(5,3),\ (14,9),\ (37,24),\ \ldots\] This is an infinite
list, where \(p_i\) and \(q_i\) both satisfy the recursion relation
\[\begin{pmatrix} 0 & 1 \\ -1 & \delta\end{pmatrix}\begin{pmatrix}
p_i \\ p_{i+1}\end{pmatrix}=\begin{pmatrix} p_{i+1}
\\ p_{i+2}\end{pmatrix},\qquad\delta=p_iq_{i+1}-p_{i+1}q_i=3.\]

\end{Theorem}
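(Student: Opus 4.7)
The overall strategy is to realise each $B_{p_i,q_i}$ as the preimage of a triangular region in an almost toric picture of a symplectic deformation of the quintic, and then exploit the mutation operation of Lecture 2 to produce an infinite sequence of such triangles inside a single diagram. Since mutation does not change the underlying symplectic manifold, every triangle in the sequence witnesses a symplectic embedding of a different Milnor fibre inside the same $(X,\omega)$.

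For the base case, I would construct a $\QQ$-Gorenstein degeneration of the quintic whose central fibre has ample canonical bundle and carries a single Wahl singularity of type $\frac{1}{25}(1,14)$ (plus possibly ADE points, which are irrelevant for the symplectic picture). By the toric P-resolution recipe of \cref{sct:ksb}, such a degeneration corresponds to a polygon with a distinguished vertex, and one obtains an almost toric diagram for a smooth fibre by implanting $\tilde\pi(25,14)$ at that vertex. The smooth fibres are symplectic deformations of the canonical quintic, and the preimage of the implanted triangle is a symplectically embedded copy of $B_{5,3}$, exactly in the spirit of the analysis in \cref{exm:KSBpres}.

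Next I would perform a mutation at the focus-focus fibre inside $\tilde\pi(25,14)$. Its monodromy is the $\mathrm{SL}_2(\ZZ)$-matrix $\matr{16}{-25}{9}{-14}$, and mutating swings the branch cut to the opposite side, so that a new T-triangle emerges adjacent to a different vertex of the ambient polygon. A direct $\ZZ$-affine computation shows that this new triangle is $\tilde\pi(196,125)$, whose preimage under the torus fibration is therefore a symplectically embedded $B_{14,9}$. Iterating produces successive T-triangles whose data $(p_i,q_i)$ transform by a fixed matrix in $\mathrm{SL}_2(\ZZ)$; in analogy with the Markov invariant appearing in \cref{thm:cp2es}, the quantity $\delta=p_iq_{i+1}-p_{i+1}q_i$ is preserved under mutation, and the one-off calculation $\delta=5\cdot 9-14\cdot 3=3$ at the base case then yields the stated recursion $p_{i+2}=3p_{i+1}-p_i$ (and analogously for $q_i$).

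The main obstacle is to verify that the iteration continues indefinitely inside a single almost toric polygon for $(X,\omega)$, rather than ``spilling over'' its boundary. At each stage one must check that the new T-triangle $\tilde\pi(p_{i+1}^2,p_{i+1}q_{i+1}-1)$ genuinely fits inside the diagram, which translates into constraints on the side-lengths of the ambient polygon (and hence on the Kähler class of $\omega$). The triangles grow rapidly in $i$, but the branch-cut geometry scales in a controlled way under the Markov-type mutation, and the heart of the proof is a quantitative estimate showing that, for a suitably chosen initial deformation $\omega$, all of the mutated triangles embed without obstruction. Once this quantitative claim is secured, the rest of the argument is bookkeeping: each iterate of the mutation matrix exposes a new $B_{p_i,q_i}$ inside the same $(X,\omega)$, and the $\delta=3$ recursion packages the whole family.
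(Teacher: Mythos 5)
Your outline captures the right general philosophy (almost toric pictures, mutation, a quantitative fitting estimate), but two of your steps do not work as stated. First, the base case: you assert a $\QQ$-Gorenstein degeneration of the quintic with ample-canonical central fibre carrying a $\frac{1}{25}(1,14)$ point, but give no construction, and in the paper's argument the surface carrying the $\frac{1}{25}(1,14)$ point is (a global version of) $V_{\Pi^-}$, which is \emph{not} a P-resolution of $\frac{1}{11}(1,3)$ --- $K$ is negative on the exceptional curve --- hence not a stable central fibre. The paper instead produces a stable quintic with a $\frac{1}{4}(1,1)$ point (a double cover of $\cp{1}\times\cp{1}$ branched over a special $(6,6)$-curve, made smoothable by Rana's theorem), so the canonical symplectic form only yields an embedded $U_{\Pi^+}$; to reach the picture containing the $\tilde\pi(25,14)$ triangle one must genuinely deform the symplectic form by sliding the focus-focus point \emph{transversally} to its monodromy eigendirection, which moves $[\omega]$ from a negative to a positive multiple of $K$ on that piece. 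This deformation is unavoidable: \cref{thm:gtes} bounds the continued-fraction length of any $B_{p,q}$ embedded for the canonical form by $4K^2+7$, and the lengths along your sequence grow without bound. Your remark about ``constraints on the K\"ahler class'' gestures at this but supplies no mechanism.

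Second, and more seriously, ``mutating at the focus-focus fibre inside $\tilde\pi(25,14)$'' cannot produce $\tilde\pi(196,125)$: mutation changes neither the manifold nor the torus fibration, only the choice of branch cut and action coordinates, so with a single cross you merely toggle between the two standard presentations of $B_{5,3}$. The engine of the infinite sequence is a \emph{second} focus-focus fibre, introduced by a nodal trade at the remaining smooth vertex of $\Pi^-$; one then alternates --- nodal-slide one branch cut short, mutate at the other --- and checks by a $\ZZ$-affine computation that the vertex at the end of the short cut is now modelled on $\tilde\pi(p_{i+1}^2,p_{i+1}q_{i+1}-1)$. (Your $\delta=3$ computation and the recursion $p_{i+2}=3p_{i+1}-p_i$ are correct.) Finally, you correctly identify the quantitative fitting problem as the crux, but the missing idea is symplectic deflation: shrink the compact edge of $\Pi^-$ to affine length $a\ll h$, after which {\cite[Lemma 3.13]{EU}} guarantees that arbitrarily many mutations stay inside the polygon.
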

The reason we are allowed to violate the inequality \(\ell\leq
4K^2+7\) is because the symplectic form is not the canonical one. It
would be interesting to investigate in greater detail how the
symplectic embeddability of \(B_{p,q}\)s depends on the cohomology
class of \(\omega\).

\begin{Remark}
Though this theorem is about quintic surfaces, we have other
examples, and indeed we expect that such sequences of symplectic
embeddings of unbounded length can be constructed in symplectic
deformations of most general type surfaces.

\end{Remark}
\begin{Remark}
The proof of \cref{thm:eu} draws heavily on ideas from the paper
\cite{HTU} of Hacking, Tevelev and Urz\'{u}a. Indeed, our paper
\cite{EU} can be seen as a symplectic account of what is happening
in \cite{HTU}.

\end{Remark}
We will start our proof of \cref{thm:eu} with an example.

\subsection{An example: the \(\frac{1}{11}(1,3)\) singularity}

Consider the
cyclic quotient singularity \(\frac{1}{11}(1,3)\).

\begin{center}
\begin{tikzpicture}
\filldraw[draw=black,thick,->,fill=lightgray] (0,3) -- (0,0) -- (11,3);
\draw[step=1.0,black,thin,dotted] (0,0) grid (11,3);

\end{tikzpicture}
\end{center}
We will truncate this polygon with a line pointing in the
\((1,-2)\)-direction. Let us call the resulting polygon \(\Pi^-\):

\begin{center}
\begin{tikzpicture}
\filldraw[draw=black,thick,->,fill=lightgray] (0,3) -- (0,1) node[black] {\(\bullet\)} -- (11/25,3/25) node[red] {\(\bullet\)} -- (11,3);
\draw[step=1.0,black,thin,dotted] (0,0) grid (11,3);

\end{tikzpicture}
\end{center}
The associated toric variety \(V_{\Pi^-}\) is {\bf not} a P-resolution
of \(\frac{1}{11}(1,3)\): if you evaluate the canonical class on the
curve which lives over the line we introduced by truncation then you
get a negative number (see \cref{app:pres}). Nonetheless, we can check
that \(V_{\Pi^-}\) has only T-singularities:
\begin{itemize}
\item the left-hand vertex is actually a smooth point: the outgoing edges
point in the directions \((0,1)\) and \((1,-2)\), which form an
integral basis for the lattice \(\ZZ^2\).
\item the right-hand (red) vertex is a \(\frac{1}{25}(1,14)\)-singularity,
whose smoothing will be a \(B_{5,3}\). To see this, note that the
matrix \(\begin{pmatrix}1 & -1 \\ -1 & 2\end{pmatrix}\) sends
\((0,1)\) and \((25,14)\) (the outgoing edges in the standard
polygon \(\tilde{\pi}(25,14)\)) to \((-1,2)\) and \((11,3)\)
respectively, which are the outgoing edges at this vertex. The
branch cut points in the direction \[\begin{pmatrix}1& -1 \\ -1 &
2\end{pmatrix}\begin{pmatrix}5\\3\end{pmatrix}=\begin{pmatrix}2
\\ 1\end{pmatrix}.\]
\end{itemize}
Let us write \(U_{\Pi^-}\) for the smoothing of \(V_{\Pi^-}\). From
what we just said, \(U_{\Pi^-}\) has the following almost toric
picture, in which we can see a symplectically embedded \(B_{5,3}\)
living over the blue-shaded region:

\begin{center}
\begin{tikzpicture}
\filldraw[draw=black,thick,->,fill=lightgray] (0,3) -- (0,1) node[black] {\(\bullet\)} -- (11/25,3/25) node[black] {\(\circ\)} -- (11,3);
\draw[step=1.0,black,thin,dotted] (0,0) grid (11,3);
\draw[dashed] (11/25,3/25) -- (11/25+2,3/25+1) node {\(\bm{\times}\)};
\filldraw[opacity=0.3,blue] (11/25,3/25) -- (11/3,1) to[out=90,in=0] (11/25+2,3/25+1.5) to[out=180,in=45] (0.3,1-0.6) -- cycle;

\end{tikzpicture}
\end{center}
By Lisca's classification, this must be diffeomorphic to one of the
smoothings which comes from a P-resolution, but how can we identify
which one?

First, let us perform a mutation and rotate the branch cut
anticlockwise by 180 degrees; we know that the broken edge which used
to meet the branch cut will become a straight line after this
mutation, so we don't need to think about it too much.

\begin{center}
\begin{tikzpicture}
\filldraw[lightgray] (0,3) -- (0,1) node[black] {\(\bullet\)} -- (11/25,3/25) -- (1,-1) -- (11,-1) -- (11,3) -- cycle;
\draw[draw=black,thick] (0,3) -- (0,1) -- (11/25,3/25) -- (1,-1);
\draw[step=1.0,black,thin,dotted] (0,-1) grid (11,3);
\draw[dashed] (11/25+6,3/25+3) -- (11/25+2,3/25+1) node {\(\bm{\times}\)};

\end{tikzpicture}
\end{center}
Now I want to change the picture by moving the focus-focus singularity
and the branch cut in parallel to the north-west.

\begin{center}
\begin{tikzpicture}
\filldraw[lightgray] (0,4) -- (0,1) node[black] {\(\bullet\)} -- (1,-1) -- (11,-1) -- (11,4) -- cycle;
\draw[draw=black,thick] (0,4) -- (0,1) -- (11/25,3/25) -- (1,-1);
\draw[step=1.0,black,thin,dotted] (0,-1) grid (11,4);
\draw[dashed] (7/5+2,4) -- (11/25+2-1.6,3/25+1+1.6) node {\(\bm{\times}\)};

\end{tikzpicture}
\end{center}
Finally, we mutate back, rotating the branch cut 180 degrees
clockwise. We need to think about this: the monodromy for the branch
cut in \(\tilde{\pi}(25,14)\) is
\(M=\begin{pmatrix}16&-25\\ 9&-14\end{pmatrix}\); we used
\(N=\begin{pmatrix}1 & -1 \\ -1 & 2\end{pmatrix}\) to put
\(\tilde{\pi}(25,14)\) into position at the vertex in our picture, so
the monodromy matrix becomes
\(NMN^{-1}=\begin{pmatrix}3&-4\\1&-1\end{pmatrix}\). Applying this to
the red shaded region in the picture, we need to see where the vectors
\((0,-1)\) and \((1,-2)\) end up (as these are the slopes of the edges
in this region).

\begin{center}
\begin{tikzpicture}
\filldraw[lightgray] (0,4) -- (0,1) node[black] {\(\bullet\)} -- (1,-1) -- (11,-1) -- (11,4) -- cycle;
\draw[draw=black,thick] (0,4) -- (0,1) -- (11/25,3/25) -- (1,-1);
\draw[step=1.0,black,thin,dotted] (0,-1) grid (11,4);
\draw[dashed] (7/5+2,4) -- (11/25+2-1.6,3/25+1+1.6) node {\(\bm{\times}\)};
\filldraw[red,opacity=0.5] (0,3/25+1+1.6-21/50) -- (0,1) -- (1,-1) -- (11,-1) -- (11,4) -- (7/5+2,4) -- cycle;
\draw[thick,->] (0,3/25+1+1.6-21/50) node {\(\circ\)} -- (0,1.2) node[midway,left] {\(\begin{pmatrix}0\\-1\end{pmatrix}\)};
\draw[thick,->] (0,1) -- (1,-1) node[midway,below left] {\(\begin{pmatrix}1\\-2\end{pmatrix}\)};

\end{tikzpicture}
\end{center}
They go to \((4,1)\) and \((11,3)\), so we end up
with the diagram below (not drawn to scale: \((4,1)\) and \((11,3)\)
are very close to parallel).

\begin{center}
\begin{tikzpicture}
\filldraw[lightgray] (0,5.5) -- (0,2.3) node[black] {\(\circ\)} -- (2,2.3+0.5) node [black] {\(\bullet\)} -- (11,5.5) -- cycle;
\draw[draw=black,thick] (0,5.5) -- (0,2.3) -- (2,2.8) -- (11,5.5);
\draw[dashed] (0,3/25+1+1.6-21/50) -- (11/25+2-1.6,3/25+1+1.6) node {\(\bm{\times}\)};
\node at (1,2.3+0.25) [below] {\(\begin{pmatrix}4\\1\end{pmatrix}\)};
\node at (6.5,4.15) [below] {\(\begin{pmatrix}11\\3\end{pmatrix}\)};

\end{tikzpicture}
\end{center}
Let us call the resulting polygon \(\Pi^+\), the corresponding toric
variety \(V_{\Pi^+}\) and the almost toric manifold determined by the
picture above \(U_{\Pi^+}\) (so \(U_{\Pi^+}\) is a smoothing of
\(V_{\Pi^+}\)). The variety \(V_{\Pi^+}\) is a P-resolution of
\(\frac{1}{11}(1,3)\), which tells us whereabouts in the
Koll\'{a}r--Shepherd-Barron/Lisca classification the smooth 4-manifold
\(U_{\Pi^-}\) lives.

\begin{Remark}
How do you know if a given toric diagram defines a P-resolution?
Once you've checked that the vertices are locally isomorphic to
wedges of the form \(\pi(dp^2,dpq-1)\), you still need to check that
the canonical class evaluates positively on the rational curves
which make up the compact part of the toric divisor. I endeavour to
explain how to check this in an appendix to these notes.

\end{Remark}
\subsection{Moving the singularity}

In the previous example, we performed an operation which we have not
yet justified. Namely, we deformed the almost toric picture by moving
the focus-focus singularity and its branch cut in parallel to the
north-west.

We have already mentioned (\cref{rmk:almosttoric}(3), see also
{\cite[Proposition 6.2]{Symington}}) that you can move the focus-focus
singularity in the direction which is an eigenvector for its monodromy
(i.e. along its branch cut, if you have chosen your branch cut to
point along this eigenvector); this changes the Lagrangian torus
fibration, but it does not change the total space up to
symplectomorphism. In the language of Symington \cite{Symington}, this
is a {\em nodal slide}.

However, if we wish to move the focus-focus singularity in a different
direction, we will need to deform the symplectic form in a more
serious way. Any almost toric picture with contractible
base\footnote{If the base has nontrivial topology then there is an
additional {\em Lagrangian Chern class} which can distinguish between
fibrations over this base, \cite{Zung}.} specifies for us a symplectic
manifold {\cite[Corollary 5.4]{Symington}} unique up to
symplectomorphism. This works in much the same way that it does in the
toric setting, except that we need to be slightly careful about how to
handle the focus-focus singularities. The resulting manifold is only
determined up to symplectomorphisms which do not necessarily preserve
the Lagrangian torus fibrations (while a toric moment image determines
the manifold up to equivariant symplectomorphisms, which preserve the
torus fibrations).

Our one-parameter family of almost toric pictures gives us a
one-parameter family \(\omega_t\) of symplectic structures, but
\(\omega_0\) and \(\omega_1\) will not be symplectomorphic:

\begin{Lemma}
In a family \(\omega_t\) of symplectic forms on \(U_{\Pi^-}\)
determined by the above sequence of almost toric pictures,
\([\omega_0]\) is a negative multiple of the canonical class \(K\)
and \([\omega_1]\) is a positive multiple of \(K\).
\end{Lemma}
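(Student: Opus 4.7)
The plan is to reduce the statement to a sign comparison in a rank-one cohomology group. Each of the polygons $\Pi^\pm$ has a unique compact edge joining a smooth toric vertex to a T-singular vertex, and the Milnor fibre $B_{5,3}$ inserted at the latter is a rational homology ball. Consequently $U_{\Pi^\pm}$ deformation retracts onto the visible sphere over the compact edge, so $H^2(U_{\Pi^-};\QQ)\cong\QQ$ and any two nonzero classes are automatically proportional. It therefore suffices to pick a generator $[S]$ of $H_2(U_{\Pi^-};\QQ)$, realised at $t=0$ as the visible sphere in $\Pi^-$, and compare the signs of $[\omega_t]\cdot[S]$ and $K\cdot[S]$ at the two endpoints.

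For the symplectic pairing, I would argue that $[\omega_t]\cdot[S]>0$ at both endpoints. At either $t\in\{0,1\}$, the visible sphere in the corresponding polygon is a union of circle fibres of the almost toric fibration over a path in the base, hence is $\omega_t$-symplectic by \cref{lma:spt-lag}, with symplectic area equal (up to normalisation) to the lattice length of the compact edge, which is strictly positive by inspection of the figures. Once one checks that the visible sphere in $\Pi^+$ represents the same class $[S]$ (up to an overall sign fixed by orientations) in $H_2(U_{\Pi^-};\QQ)$, this yields a common sign for $[\omega_0]\cdot[S]$ and $[\omega_1]\cdot[S]$.

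The sign of $K\cdot[S]$ flips between the endpoints, and this is where the P-resolution criterion does the work. The toric model $V_{\Pi^+}$ is by construction a P-resolution of $\tfrac{1}{11}(1,3)$, so by \cref{dfn:pres} its canonical class evaluates positively on the compact exceptional curve, which is precisely the visible sphere; $V_{\Pi^-}$, by contrast, is defined exactly by the opposite condition, and the relevant intersection number is the one computed in \cref{app:pres}. The visible curve avoids the remaining T-singularity in each case, and since T-singularities are $\QQ$-Gorenstein, the canonical class is $\QQ$-Cartier across the smoothing family, so these signs descend to $K\cdot[S]$ in $U_{\Pi^\pm}$. Combining with the previous paragraph, $[\omega_0]\cdot[S]>0$ and $K\cdot[S]<0$ force $[\omega_0]$ to be a negative multiple of $K$, while $[\omega_1]\cdot[S]>0$ and $K\cdot[S]>0$ force $[\omega_1]$ to be a positive multiple. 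The main delicate point is the coherent identification of $[S]$ across the family: one must verify that the mutation and north-west translation do not flip the sign of the $H_2$-class of the visible sphere, a bookkeeping exercise that is manageable once the affine structure on the base and the action of the monodromy on cycles are carefully tracked.
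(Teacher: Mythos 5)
Your overall strategy (rank-one \(H^2\), compare the signs of the symplectic area and of \(K\) on a generating cycle, get the sign of \(K\) from the P-resolution criterion) is the paper's strategy, but as written your argument is internally inconsistent. You claim (a) that the visible cycles in \(\Pi^-\) and \(\Pi^+\) represent the same class \([S]\), so that \([\omega_0]\cdot[S]\) and \([\omega_1]\cdot[S]\) are both positive, and (b) that ``the sign of \(K\cdot[S]\) flips between the endpoints.'' Both cannot hold: \(K\) and \([S]\) are fixed topological data on the fixed smooth manifold \(U_{\Pi^-}\) (and \(c_1\) is constant along a continuous family of symplectic forms), so \(K\cdot[S]\) is a single number independent of \(t\). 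What actually happens is that the two visible cycles are \emph{negatively} proportional in \(H_2(U_{\Pi^-};\QQ)\); equivalently, it is the symplectic area of a fixed class that changes sign along the family, while \(K\cdot[S]\) does not. The paper sidesteps the whole identification issue: it constructs a cycle \(G_{\Pi^-}\) with \(\int_{G_{\Pi^-}}\omega_0>0\) and \(K\cdot G_{\Pi^-}<0\), a separate cycle \(G_{\Pi^+}\) with \(\int_{G_{\Pi^+}}\omega_1>0\) and \(K\cdot G_{\Pi^+}>0\), and then invokes \(H^2(U_{\Pi^-};\RR)=\RR\) to read off the monotonicity type at each endpoint independently, with no need to compare \(G_{\Pi^-}\) to \(G_{\Pi^+}\). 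Your ``delicate bookkeeping'' step is thus both unnecessary and, in the form you state it, false.

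A secondary gap: there is no ``visible sphere over the compact edge.'' The preimage of the compact edge \(c\) is a \emph{disc} \(C\) — the circle fibres collapse to a point only at the genuine vertex, while at the smoothed vertex the fibre is an honest circle — so \(C\) is not a cycle. To close it up one must add the Lagrangian pinwheel \(L\) over the branch cut, which wraps \(p=5\) times around that boundary circle; the generator is \(G_{\Pi^-}=5C-L\). Since \(L\) is Lagrangian this does not disturb the positivity of the area, but the cycle has to be built this way for the class to exist at all (and correspondingly \(H_1\) of the rational homology ball piece is \(\ZZ/5\), not zero, which is why the multiplicity appears). Finally, \cref{lma:spt-lag} produces \emph{Lagrangian} submanifolds from parallel transport of Lagrangian circles; it cannot be the reason the visible surface over an edge is symplectic. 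That surface is symplectic because of the standard toric local model along a boundary edge, with area the affine length of the edge — true, but for a different reason than the one you cite.
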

\begin{proof}
Note that \(H_2(U_{\Pi^-};\ZZ)=\ZZ\); we can see a generator as
follows. Consider the almost toric picture at the beginning of the
sequence:
\begin{center}
\begin{tikzpicture}
\filldraw[draw=black,thick,->,fill=lightgray] (0,3) -- (0,1) node[black] {\(\bullet\)} -- (11/25,3/25) node[black] {\(\circ\)} -- (11,3);
\draw[step=1.0,black,thin,dotted] (0,0) grid (11,3);
\node at (11,3) [above] {\((11,3)\)};
\draw[dashed] (11/25,3/25) -- (11/25+2,3/25+1) node {\(\bm{\times}\)};
\node at (0.1,0.6) [below] {\(c\)};
\end{tikzpicture}
\end{center}
Consider the following chains:
\begin{itemize}
\item \(C\): the preimage of the edge \(c\) in the
almost toric picture
\item \(L\): the Lagrangian pinwheel living
over the branch cut.
\end{itemize}
These are both discs with a common boundary (the circular fibre over
the vertex marked with a circle), however, the pinwheel wraps
\(p=5\) times around this circle, so to get a closed cycle we must
take \(G_{\Pi^-}=5C-L\). This cycle \(G_{\Pi^-}\) generates
\(H_2(U_{\Pi^-};\ZZ)\). We have \(\int_{G_{\Pi^-}}\omega_0>0\)
because \(C\) has positive symplectic area and \(L\) has zero
symplectic area. Moreover, it is possible to check that \(K\cdot
G_{\Pi^-}<0\) (this is equivalent to saying that \(V_{\Pi^-}\) is
not a P-resolution of \(\frac{1}{11}(1,3)\)).

If we do the same construction for the almost toric picture at the
end of the sequence:
\begin{center}
\begin{tikzpicture}
\filldraw[lightgray] (0,5.5) -- (0,2.3) node[black] {\(\circ\)} -- (2,2.3+0.5) node [black] {\(\bullet\)} -- (11,5.5) -- cycle;
\draw[draw=black,thick] (0,5.5) -- (0,2.3) -- (2,2.8) -- (11,5.5);
\draw[dashed] (0,3/25+1+1.6-21/50) -- (11/25+2-1.6,3/25+1+1.6) node {\(\bm{\times}\)};
\node at (1,2.3+0.25) [below] {\(c'\)};
\end{tikzpicture}
\end{center}
then we get a cycle \(G_{\Pi^+}\) with
\(\int_{G_{\Pi^+}}\omega_1>0\), but now \(K\cdot G_{\Pi^+}>0\).

Since \(H^2(U_{\Pi^-};\RR)=\RR\), the symplectic form falls into one
of the following classes:
\begin{itemize}
\item positively monotone (if its integral over some cycle \(D\) is
positive but \(K\cdot D<0\))
\item negatively monotone (if its integral over some cycle \(D\) is
positive and \(K\cdot D>0\))
\item exact (if its integral over any cycle is zero).
\end{itemize}
Clearly \(\omega_0\) is positively monotone, \(\omega_1\) is
negatively monotone (and, for some \(t\in(0,1)\), \(\omega_t\) is
exact). \qedhere

\end{proof}
\begin{Remark}
If we revisit the proof of the Koll\'{a}r--Shepherd-Barron
classification, we see that there is a magical step where you apply
the semistable minimal model program to the total space of your
smoothing. In the current context, this means that you apply Mori
flips to replace \(K\)-negative curves by \(K\)-positive
curves. Before the flip, in fact, \(-K\) is ample on the central
fibre and hence (by openness of amplitude) relatively ample on some
neighbourhood of the central fibre, so we get an anticanonical
(positively monotone) symplectic form on nearby fibres. After the
flip, \(K\) is relatively ample, and we get a canonical (negatively
monotone) symplectic form on every fibre. Our construction
interpolates between these two symplectic forms. The fact that the
surfaces before and after the flip really coincide with the toric
surfaces \(V_{\Pi^-}\) and \(V_{\Pi^+}\) follows from the paper
\cite{HTU}.

\end{Remark}
\subsection{Infinitely many mutations}

\subsubsection{Nodal trades}

The construction of a Lagrangian torus fibration on the smoothing of
the \(\frac{1}{dp^2}(1,dpq-1)\) singularity works just as well when we
have a smooth point (e.g. \(d=p=1\), \(q=0\)) and gives us the
following almost toric picture of the ball:

\begin{center}
\begin{tikzpicture}
\filldraw[lightgray] (0,2) -- (0,0) node[black] {\(\circ\)} -- (2,0) -- cycle;
\draw[thick,black] (0,2) -- (0,0) -- (2,0);
\draw[dashed] (0,0) -- (1/2,1/2) node {\(\bm{\times}\)};

\end{tikzpicture}
\end{center}
I like to call this Hamiltonian system the {\em Auroux system} because
I learned of it from Auroux's classic paper on Fano mirror symmetry
\cite{Auroux}.

In a toric moment polygon, a corner corresponds to a smooth point of
the toric variety if it has a neighbourhood which is \(\ZZ\)-affine
isomorphic to the nonnegative quadrant in \(\RR^2\). We can therefore
replace such a local Lagrangian torus fibration with the Auroux
system. Symington calls this operation a {\em nodal trade} and shows
{\cite[Theorem 6.5]{Symington}} that it doesn't change the
symplectomorphism type of the manifold, only the Lagrangian torus
fibration.

\subsubsection{Back to \(\frac{1}{11}(1,3)\)}

Let us return to our example \(U_{\Pi^-}\):

\begin{center}
\begin{tikzpicture}
\filldraw[draw=black,thick,->,fill=lightgray] (0,3) -- (0,2.5) node[black] {\(\bullet\)} -- (2.5*11/25,2.5*3/25) node[black] {\(\circ\)} -- (11,3);
\node at (11,3) [above] {\((11,3)\)};
\draw[dashed] (2.5*11/25,2.5*3/25) -- (2.5*11/25+2,2.5*3/25+1) node {\(\bm{\times}\)};

\end{tikzpicture}
\end{center}
We may perform a nodal trade at the remaining vertex to obtain the
following almost toric picture:

\begin{center}
\begin{tikzpicture}
\filldraw[draw=black,thick,->,fill=lightgray] (0,3) -- (0,2.5) node[black] {\(\circ\)} -- (2.5*11/25,2.5*3/25) node[black] {\(\circ\)} -- (11,3);
\node at (11,3) [above] {\((11,3)\)};
\draw[dashed] (2.5*11/25,2.5*3/25) -- (2.5*11/25+2,2.5*3/25+1) node {\(\bm{\times}\)};
\draw[dashed] (0,2.5) -- (0.5,2) node {\(\bm{\times}\)};

\end{tikzpicture}
\end{center}
Let us perform a nodal slide to make the right-hand branch cut very
small:

\begin{center}
\begin{tikzpicture}
\filldraw[draw=black,thick,->,fill=lightgray] (0,3) -- (0,2.5) node[black] {\(\circ\)} -- (2.5*11/25,2.5*3/25) node[black] {\(\circ\)} -- (11,3);
\node at (11,3) [above] {\((11,3)\)};
\draw[dashed] (2.5*11/25,2.5*3/25) -- (2.5*11/25+2*0.2,2.5*3/25+1*0.2) node {\(\bm{\times}\)};
\draw[dashed] (0,2.5) -- (0.5,2) node {\(\bm{\times}\)};

\end{tikzpicture}
\end{center}
and mutate by rotating the left-hand branch cut anticlockwise 180
degrees; this means applying a matrix \(M\) to the lower half of the
diagram and we can figure out \(M\) as follows:
\begin{itemize}
\item we know that the vertices marked with circles are not really
breaking points and the edge passing through that point is a
straight line, so we need
\(M\begin{pmatrix}1\\-2\end{pmatrix}=\begin{pmatrix}0\\-1\end{pmatrix}\).
\item we know that the vector \((1,-1)\) is a 1-eigenvector of \(M\).
\end{itemize}
The only possibility is \(M=\begin{pmatrix}2 & 1 \\ -1 &
0\end{pmatrix}\), and we see that the edge pointing in the
\((11,3)\)-direction will end up pointing in the
\((25,-11)\)-direction:

\begin{center}
\begin{tikzpicture}
\filldraw[draw=black,thick,->,fill=lightgray] (0,3) -- (0,7/5) node[black] {\(\circ\)} -- (55/28,15/28) node[black] {\(\circ\)} -- (11,3);
\node at (11,3) [above] {\((11,3)\)};
\draw[dashed] (0,7/5) -- (0.1*5,7/5-0.1) node {\(\bm{\times}\)};
\draw[dashed] (55/28,15/28) -- (0.5,2) node {\(\bm{\times}\)};
\node at (1,0.5) {\(\begin{pmatrix}25\\-11\end{pmatrix}\)};
\filldraw[opacity=0.3,blue] (55/28,15/28) -- (55/28+0.5,15/28+3/22) to[out=135,in=0] (0.4,2.5) to[out=180,in=135] (55/28-0.75,15/28+0.75*11/25) -- cycle;

\end{tikzpicture}
\end{center}
(and the other branch cut, which formerly pointed in the
\((2,1)\)-direction now points in the \((5,-2)\)-direction.

\begin{Lemma}
The preimage of the blue region in this picture is a copy of
\(B_{14,9}\).
\end{Lemma}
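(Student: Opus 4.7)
The plan is to recognise the vertex $V_2' = (55/28, 15/28)$ in the final almost toric diagram as locally $\ZZ$-affinely isomorphic to the standard almost toric model $\tilde{\pi}(14^2,\ 14\cdot 9 - 1)$ of the T-singularity $\frac{1}{196}(1,125)$, and then to appeal to Symington's reconstruction theorem to conclude that the preimage of a sufficiently small neighbourhood of this vertex together with its branch cut is symplectomorphic to the completion of the Milnor fibre $B_{14,9}$.

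The first step is to read off the primitive outgoing edge vectors at $V_2'$ directly from the coordinates in the figure: the edge to $V_1' = (0, 7/5)$ is in primitive direction $(-25, 11)$, while the edge to $(11, 3)$ is in primitive direction $(11, 3)$. The lattice index at this corner is $|\det((-25,11), (11,3))| = |{-75 - 121}| = 196 = 14^2$, which is the first invariant of the candidate singularity.

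To pin down the second invariant I would look for $N \in SL(2, \ZZ)$ sending the standard outgoing edges $(0,1)$ and $(196,125)$ of $\tilde{\pi}(196,125)$ to $(-25, 11)$ and $(11, 3)$ respectively. Writing $N = \begin{pmatrix} a & -25 \\ b & 11 \end{pmatrix}$, the condition $N(196, 125)^T = (11, 3)^T$ forces $(a,b) = (16, -7)$, and one checks $\det N = 1$. This identifies the cone type at $V_2'$ as $\frac{1}{196}(1, 125) = \frac{1}{14^2}(1, 14\cdot 9 - 1)$. Under the same $N$, the standard branch cut direction $(14, 9)$ is sent to $(-1, 1)$, which is simultaneously the eigendirection of the conjugated local monodromy and (up to sign) the direction of the branch cut drawn from $V_2'$ in the figure, so the two almost toric pictures really do agree locally near the vertex.

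Finally, Symington's reconstruction theorem (\cite[Corollary 5.4]{Symington}) ensures that almost toric data on a contractible base determines the underlying symplectic manifold up to symplectomorphism. Applied to a neighbourhood of the vertex-plus-branch-cut, this identifies the preimage of the blue region with the Milnor fibre of the T-singularity $\frac{1}{14^2}(1, 14\cdot 9 - 1)$, which is $B_{14, 9}$ by definition. The substantive work is really just the integer linear algebra of the previous paragraph; once the combinatorial match is made, the conclusion is formal. The only mild technical point is that one may first need to shrink the branch cut (via a nodal slide, as in the earlier step of the example) so that it genuinely sits inside the blue region.
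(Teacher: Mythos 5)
Your proof is correct and takes essentially the same route as the paper: you identify the vertex as locally $\ZZ$-affinely isomorphic to $\tilde{\pi}(196,125)$ and note $196=14^2$, $125=14\cdot 9-1$. The only difference is that your matrix $N=\begin{pmatrix}16 & -25\\ -7 & 11\end{pmatrix}$ is the inverse of the paper's $\begin{pmatrix}11 & 25\\ 7 & 16\end{pmatrix}$, and you additionally verify the branch-cut direction and cite Symington's reconstruction theorem, which the paper leaves implicit.
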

\begin{proof}
The matrix \(\begin{pmatrix}11 & 25 \\ 7 & 16\end{pmatrix}\) sends
\((-25,11)\) and \((11,3)\) to \((0,1)\) and \((196,125)\)
respectively, so this vertex is locally modelled on
\(\tilde{\pi}(196,125)\). Note that \(196=14^2\) and \(125=14\times
9-1\) so this is the almost toric model for \(B_{14,9}\). \qedhere

\end{proof}
After making another nodal slide to put this branch cut out of the
way, we can now mutate at the other branch cut and we obtain a copy of
\(B_{37,24}\). Continuing in this manner, we get a whole sequence of
symplectically embedded rational homology balls.

\begin{Remark}
It is a slightly fiddly exercise in affine geometry to check that
the \(p_i,q_i\) we obtain in this manner satisfy the recursion
formula stated in \cref{thm:eu}; a sequence \(p_i,q_i\) satisfying
this recursion is called a {\em Mori sequence}.

\end{Remark}
\begin{Lemma}\label{cor:mori}
If \(X\) is a compact symplectic 4-manifold which admits a
symplectic embedding of \(U_{\Pi^+}\) then some symplectic
deformation of \(X\) admits a symplectic embedding of infinitely
many rational homology balls \(B_{p_i,q_i}\) for a Mori sequence
\((p_i,q_i)\).
\end{Lemma}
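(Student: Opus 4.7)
The plan is to combine the construction of the previous subsection---which exhibits inside $U_{\Pi^-}$ an infinite sequence of symplectically embedded rational homology balls $B_{p_i,q_i}$---with the one-parameter family of almost toric pictures from Section 3.3 that interpolates between $\Pi^-$ and $\Pi^+$. Note that $U_{\Pi^+}$ and $U_{\Pi^-}$ share a common underlying smooth 4-manifold $M$ (both being smoothings of the $\frac{1}{11}(1,3)$ singularity, hence diffeomorphic by Lisca's classification), and that the family of almost toric pictures obtained by sliding the focus-focus singularity in a non-eigendirection determines a continuous family of symplectic forms $\omega_t$ on $M$: at $t=1$ the negatively monotone form coming from $\Pi^+$, and at $t=0$ the positively monotone form coming from $\Pi^-$.

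Given the hypothesised symplectic embedding $\iota\colon U_{\Pi^+}\hookrightarrow X$, I would pull back the reversed family $\omega_{1-s}$ along $\iota$ and extend it to a symplectic deformation $\Omega_s$ of $X$ that agrees with the original form on $X\setminus \iota(U_{\Pi^+})$. Throughout the family of almost toric pictures one can arrange the boundary of the compact region to be a contact-type hypersurface contactomorphic to a fixed contact model of $L(11,3)$; a parametric Moser argument on a collar then lets us modify the family to be constant near the boundary, so that it glues to the untouched ambient form on the exterior to yield a bona fide deformation $\Omega_s$ of the symplectic form on $X$. At $s=1$ the region $\iota(M)\subset (X,\Omega_1)$ carries the positively monotone symplectic form, i.e.\ is symplectomorphic to $U_{\Pi^-}$, so applying the nodal trade, nodal slide, and iterated mutation procedure from the previous subsection inside this region produces the claimed infinite sequence of symplectically embedded $B_{p_i,q_i}$ with the Mori recursion from \cref{thm:eu}.

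The main obstacle will be the extension/gluing step. Symington's reconstruction theorem only determines an almost toric manifold up to symplectomorphism, not canonically, so some care is required to control the boundary behaviour of the family $\omega_t$ uniformly in $t$ and match it to the pre-existing symplectic form on $X\setminus \iota(U_{\Pi^+})$. The delicate point is verifying that the nodal slide chosen to move the focus-focus singularity into the interior does not alter the induced contact form on a collar of the boundary lens space, at least up to an isotopy of contact forms that can be absorbed by a parametric Moser trick. Once this collar stabilization is in place, the infinite sequence of embedded balls follows immediately from the iterated mutation construction of the previous subsection.
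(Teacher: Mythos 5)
Your overall strategy is the same as the paper's: deform the symplectic form inside the embedded copy of \(U_{\Pi^+}\) so that it becomes (a piece of) \(U_{\Pi^-}\), then run the nodal trade/slide/mutation machine there. The boundary-matching issue you worry about at length is real but comparatively routine: the deformation of almost toric pictures that moves the focus-focus point can be taken to be supported in the interior of the base polygon, away from its boundary, so it extends by the identity to the rest of \(X\); one does not need to re-match contact forms on a lens space collar.

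The genuine gap is elsewhere, and it is precisely the quantitative point the paper flags (in the earlier remark on Milnor fibres) as the one place where sizes matter. The iterated mutation construction of the previous subsection was carried out in the \emph{complete} (non-compact) \(U_{\Pi^-}\), whose base polygon is unbounded. What the embedding of \(U_{\Pi^+}\) into \(X\) gives you, after the deformation, is only a \emph{bounded} piece \(U_{\Pi^-(a,h)}\), living over the polygon \(\Pi^-(a,h)\) obtained by rescaling the compact edge \(c\) to affine length \(a\) and truncating at height \(h\). Each successive mutation requires a branch cut of a definite affine length to fit inside this bounded polygon, and it is not automatic that infinitely many of them do; your phrase ``applying \dots the iterated mutation procedure from the previous subsection inside this region'' asserts exactly what needs to be proved. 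The paper's proof supplies the missing ingredients: first a further deformation (symplectic deflation) making \(a\) arbitrarily small relative to \(h\), and then an affine-geometric argument ({\cite[Lemma 3.13]{EU}}) showing that when \(a\ll h\) one can indeed perform arbitrarily many mutations inside \(\Pi^-(a,h)\). Without these two steps your argument only yields finitely many of the \(B_{p_i,q_i}\).
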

\begin{proof}
Let us be slightly more precise about the size of
neighbourhoods. Let \(\Pi^-(a,h)\) denote the bounded polygon
obtained by rescaling \(\Pi^-\) until the compact edge \(c\) (with
slope \(-2\)) has affine length \(a\) and truncating\footnote{Here
we just mean to excise everything above height \(h\) from the
manifold, not to make a symplectic cut at that height.} from above
at height \(h\). We have seen that you can deform the symplectic
form on \(U_{\Pi^+}\) to get \(U_{\Pi^-}(a,h)\) for some \(a,h\). By
a further deformation, we can make \(a\) arbitrarily small (this
amounts to moving the truncating edge \(c\) closer to the origin,
and is known in symplectic geometry as {\em symplectic
deflation}). It is another fiddly exercise in affine geometry
{\cite[Lemma 3.13]{EU}} to show that you can do arbitrarily many
mutations to \(\Pi^-(a,h)\) if \(a\ll h\). Therefore
\(U_{\Pi^-(a,h)}\), the part of \(U_{\Pi^-}\) living over
\(\Pi^-(a,h)\)) contains a Mori sequence of rational homology balls,
which we now find in this deformation of \(X\). \qedhere

\end{proof}
We now apply this result to the quintic surface.

\subsection{The quintic surface}

Consider the surface \(\cp{1}\times\cp{1}\) and let \(B\) be a curve
of bidegree \((6,6)\) which:
\begin{itemize}
\item intersects the diagonal \(\cp{1}\) at six points each with
multiplicity 2,
\item intersects some fixed ruling \(\cp{1}\times\{z\}\) at three points
each with multiplicity 2.
\end{itemize}
Take a double cover of \(\cp{1}\times\cp{1}\) branched along
\(B\). The preimage of the diagonal is then a pair of rational
\(-4\)-curves (intersecting at six points) and the preimage of the
ruling is a pair of rational \(-3\)-curves (intersecting at three
points). If we pick one of these \(-4\)-curves \(C_1\) and one of
these \(-3\)-curves \(C_2\) then they intersect at a single point
(just like the diagonal and the ruling).

Let us collapse the curves \(C_1,C_2\). We obtain a singularity of
type \(\frac{1}{11}(1,3)\): collapsing a chain of rational curves
\(C_1,\ldots,C_\ell\) with self-intersections \(-b_1,\ldots,-b_\ell\)
like this yields the cyclic quotient singularity
\(b_1-\frac{1}{b_2-\frac{1}{\cdots-\frac{1}{b_\ell}}}\), and
\(\frac{11}{3}=4-\frac{1}{3}\).

This singular surface \(Z\) has a P-resolution \(Y\to Z\); \(Y\) is
obtained from the branched double cover by collapsing \(C_1\). This is
the P-resolution we have been studying (corresponding to the polygon
\(\Pi^+\)). The surface \(Y\) is a stable quintic surface with a
\(\frac{1}{4}(1,1)\)-singularity. By a theorem of Rana {\cite[Theorem
4.10]{Rana}}, this quintic is smoothable and, by construction, this
smoothing contains a symplectically embedded \(U_{\Pi^+}\) (symplectic
with respect to the (negatively monotone) canonical symplectic
form). By \cref{cor:mori}, we can make a deformation of the symplectic
form on the quintic, supported on \(U_{\Pi^+}\), so that the new
symplectic form admits symplectic embeddings of \(B_{p_i,q_i}\) for a
Mori sequence \(p_i,q_i\), which proves \cref{thm:eu}.

\appendix
\section{Appendix: P-resolutions}
\label{app:pres}

In this appendix, I explain how you can tell if a partial resolution
of a cyclic quotient singularity is a P-resolution or not. The
appendix is a modification of a blog post I wrote on 4th June 2018
(http://jde27.uk/blog/cyclic2.html).

\subsection{Discrepancies}

Let \(X\) be a cyclic quotient singularity of type
\(\frac{1}{n}(1,a)\) and let \(\pi\colon\tilde{X}\to X\) be its
minimal resolution. The exceptional locus of \(\pi\) comprises a chain
of rational curves \(C_1,\ldots,C_\ell\) with \(C_i^2=-b_i\) where
\[\frac{n}{a}=b_1-\frac{1}{b_2-\frac{1}{\cdots-\frac{1}{b_\ell}}}.\]
(See my earlier blog post http://jde27.uk/blog/cyclic1.html for an
explanation of this in terms of truncations of the moment
polygon). Our first goal is to express the canonical class
\(K_{\tilde{X}}=-c_1(\tilde{X})\in H^2(\tilde{X};\ZZ)\) in terms of
Poincar\'{e}-duals of the closed curves \(C_1,\ldots,C_\ell\) in
\(\tilde{X}\). However, \(\tilde{X}\) is not compact, so we must make
do with Alexander-Lefschetz duality \(H^2(\tilde{X};\ZZ)\cong
H_2(\tilde{X},\partial\tilde{X};\ZZ)\). The compact curves
\(C_1,\ldots,C_\ell\) do not generate
\(H_2(\tilde{X},\partial\tilde{X};\ZZ)\), rather they span the image
of the map \(H_2(\tilde{X};\ZZ)\to
H_2(\tilde{X},\partial\tilde{X};\ZZ)\), whose cokernel is the finite
group \(H_1(\partial\tilde{X};\ZZ)=\ZZ/n\) by the long exact sequence
of the pair \((\tilde{X},\partial\tilde{X})\). If we work over \(\QQ\)
then this cokernel vanishes, but we are then forced to write
\(K_{\tilde{X}}\) as a {\em rational} linear combination of the
Alexander-Lefschetz duals of \(C_1,\ldots,C_\ell\):
\[K_{\tilde{X}}=\sum k_iC_i.\] The numbers \(k_i\) are called the {\em
discrepancies} of the singularity (they measure the discrepancy
between \(K_{\tilde{X}}\) and \(\pi^*K_X=0\)).

We have the adjunction formula \(K_{\tilde{X}}\cdot C_i=-C_i^2-2\) for
each \(i\). Substituting \(K_{\tilde{X}}=\sum k_iC_i\), we obtain a
system of simultaneous equations \[\sum k_iE_i\cdot E_j=b_j-2,\quad
j=1,\ldots,\ell,\] for the discrepancies.

\begin{Example}
Consider the minimal resolution of the \(\frac{1}{4}(1,1)\)
singularity which has exceptional locus \(C_1\) with
\(C_1^2=-4\). Then we have \(-4k_1=2\) so \(k_1=-1/2\).

\end{Example}
\begin{Example}
Consider the minimal resolution of the \(\frac{1}{11}(1,3)\)
singularity which has exceptional locus \(C_1\cup C_2\) with
\(C_1^2=-4\), \(C_2^2=-3\) and \(C_1\cdot C_2=1\). Then we have
\[-4k_1+k_2=2,\qquad k_1-3k_2=1,\] or \(k_1=-7/11\), \(k_2=-6/11\).

\end{Example}
\begin{Remark}
A singularity is called {\em terminal} if all the discrepancies are
positive, {\em canonical} if they are nonnegative, {\em log
terminal} if they are \(>-1\) and {\em log canonical} if they are
\(\geq -1\). We can see that these singularities are all log
terminal but not canonical (indeed, canonical surface singularities
are precisely the ADE singularities; there are no nonsmooth terminal
surface singularities).

\end{Remark}
\subsection{Is it a P-resolution or not?}

Recall that a partial resolution \(g\colon Z\to\CC^2/G\) of a cyclic
quotient singularity is called a P-resolution if it has at worst
T-singularities and if \(K_Z\cdot D>0\) for any curve \(D\subset Z\)
contracted by \(g\). Let \(f\colon Y\to Z\) be a resolution of
singularities of \(Z\). Let \(\cup_iE_i\) be the exceptional locus of
\(f\) and let \(k_i\) be the discrepancies (so \(K_Y=f^*K_Z+\sum
k_iE_i\)). If \(D\subset Z\) is an irreducible curve then there is a
unique irreducible curve \(C\subset Y\) such that \(f_*C=D\). We have
\[K_Z\cdot D=f^*K_Z\cdot C=(K_Y-\sum k_iE_i)\cdot C.\]

\begin{Example}
Suppose that \(Y\) is the minimal resolution of
\(\frac{1}{11}(1,3)\) (which contains \(C_1,C_2\) with \(C_1^2=-4\),
\(C_2^2=-3\)) and \(Z\) is obtained from \(Y\) by collapsing
\(C_1\). We have
\[K_Z\cdot f_*C_2=K_Y\cdot C_2-(-1/2)C_1\cdot C_2.\]
Since \(K_Y\cdot C_2=-C_2^2-2=1\) and \(C_1\cdot C_2=1\), this gives
\[K_Z\cdot f_*C_2=\frac{3}{2}>0,\]
which shows that \(f\colon Y\to Z\) is a P-resolution.

\end{Example}
\begin{Exercise}
Consider the chain \(C_1,C_2\) with \(C_1^2=-4\), \(C_2^2=-3\)
again. Blow up a point on \(C_1\) and a point on the exceptional
curve of this blow-up to obtain a surface \(Y'\) with a chain of
spheres \(E_1,E_2,E_3,E_4\) with \(E_1^2=-1\), \(E_2^2=-2\),
\(E_3^2=-5\), \(E_4^2=-3\). Collapse \(E_2,E_3,E_4\) to obtain a
surface \(Z'\) with a \(\frac{1}{25}(1,14)\) singularity (because
\(2-\frac{1}{5-\frac{1}{3}}=\frac{25}{14}\)). Show that
\(K_{Z'}\cdot (f')_*E_1\) is negative, so that this is not a
P-resolution of \(\frac{1}{11}(1,3)\). [I get \(K_{Z'}\cdot
(f')_*E_1=-3/5\).]
\end{Exercise}
\subsection{Enumerating P-resolutions}

Koll\'{a}r and Shepherd-Barron show {\cite[Lemma 3.13]{KSB}} that any
P-resolution of \(\frac{1}{n}(1,a)\) is obtained from a certain
``maximal resolution'' by contracting some curves to get
T-singularities (equivalently, the smoothing is obtained by rationally
blowing down some chains of curves in the maximal resolution). Since
there is a finite number of curves to contract/rationally blow-down,
this shows there are only finitely many P-resolutions.

Rather than explaining their proof, I will just describe how to find
the maximal resolution. Let \(X\) be the cyclic quotient
singularity. Let's define a poset whose elements are resolutions
\(f\colon Y\to X\) obtained by blowing up intersections between
exceptional curves of the minimal resolution and where \(Y_1<Y_2\) if
the resolution \(f_2\colon Y_2\to X\) factors through a morphism
\(Y_2\to Y_1\). For any \(Y\) in our poset, we have
\(K_{Y}=\sum(\alpha_j(Y)-1)E_j\) for some \(\alpha_j(Y)\in\QQ\). If
\(Y_1<Y_2\) then \(\max_j\alpha_j(Y_1)>\max_j\alpha_j(Y_2)\) (i.e. as
you move up in the poset, the number \(\max_j\alpha_j(Y)\)
increases). The maximal resolution is defined to be the maximal
element in the poset for which \(\max_j\alpha_j(Y)<1\).

\begin{Example}
Consider the case \(n=11,a=3\). The minimal resolution is a chain of
spheres \(C_1,C_2\) with self-intersections \(-4,-3\). Blow up the
unique intersection point to obtain a \(-1\)-sphere \(E\) (and
continue to write \(C_1,C_2\) for the proper transforms of
\(C_1,C_2\)). We can compute that
\[K=-\frac{7}{11}C_1-\frac{2}{11}E-\frac{6}{11}C_2,\] giving
\(\alpha_1=4/11\), \(\alpha_2=9/11\), \(\alpha_3=5/11\). If we
blow-up the point \(C_1\cap E\) then we get \(\alpha_2=13/11>1\)
(for the new exceptional sphere). If we blow-up the point \(C_2\cap
E\) then we get \(\alpha_3=14/11>1\) (for the new exceptional
sphere). So the maximal resolution is the one-point blow-up of the
minimal resolution. This has a chain of three spheres with
self-intersections \(-5,-1,-4\). We can collapse the \(-4\)-curve to
get a surface with a T-singularity (this is a P-resolution of
\(\frac{1}{11}(1,3)\)). We can collapse the \(-1\) curve and then
the \(-4\) curve to get a surface with a T-singularity (this is
another P-resolution). This exhausts the possible P-resolutions.

\end{Example}
\begin{Exercise}
Work through the example of \(\frac{1}{19}(1,7)\) ({\cite[Example
3.15]{KSB}}) which I discussed in \cref{exm:KSBpres}.

\end{Exercise}
\bibliography{kias}

\begin{thebibliography}{10}

\bibitem{Auroux}
D.~Auroux.
\newblock Mirror symmetry and {T}-duality in the complement of an anticanonical
  divisor.
\newblock {\em Journal of G\"okova Geometry Topology}, 1:51--91, 2007.

\bibitem{BhupalOno}
M.~Bhupal and K.~Ono.
\newblock Symplectic fillings of links of quotient surface singularities.
\newblock {\em Nagoya Math. J.}, 207:1--45, 2012.

\bibitem{CieliebakEliashberg}
K.~Cieliebak and Ya. Eliashberg.
\newblock {\em From {S}tein to {W}einstein and back}, volume~59 of {\em
  American Mathematical Society Colloquium Publications}.
\newblock American Mathematical Society, Providence, RI, 2012.
\newblock Symplectic geometry of affine complex manifolds.

\bibitem{Delzant}
T.~Delzant.
\newblock Hamiltoniens p\'{e}riodiques et images convexes de l'application
  moment.
\newblock {\em Bull. Soc. Math. France}, 116(3):315--339, 1988.

\bibitem{Duistermaat}
J.~J. Duistermaat.
\newblock On global action-angle coordinates.
\newblock {\em Comm. Pure Appl. Math.}, 33(6):687--706, 1980.

\bibitem{EvansBook}
J.~D. Evans.
\newblock {\em Lectures on {L}agrangian torus fibrations}, volume 105 of {\em
  London Mathematical Society Student Texts}.
\newblock Cambridge University Press, Cambridge, 2023.

\bibitem{ES1}
J.~D. Evans and I.~Smith.
\newblock Markov numbers and {L}agrangian cell complexes in the complex
  projective plane.
\newblock {\em Geom. Topol.}, 22(2):1143--1180, 2018.

\bibitem{ES2}
J.~D. Evans and I.~Smith.
\newblock Bounds on {W}ahl singularities from symplectic topology.
\newblock {\em Algebr. Geom.}, 7(1):59--85, 2020.

\bibitem{EU}
J.~D. Evans and G.~Urz\'{u}a.
\newblock Antiflips, mutations, and unbounded symplectic embeddings of rational
  homology balls.
\newblock {\em Ann. Inst. Fourier (Grenoble)}, 71(5):1807--1843, 2021.

\bibitem{GalkinMikhalkin}
S.~Galkin and G.~Mikhalkin.
\newblock Singular symplectic spaces and holomorphic membranes.
\newblock {\em Eur. J. Math.}, 8(3):932--951, 2022.

\bibitem{HP1}
P.~Hacking and Yu. Prokhorov.
\newblock Degenerations of {D}el {P}ezzo surfaces {I}.
\newblock {\em arXiv:0509529}, 2005.

\bibitem{HP2}
P.~Hacking and Yu. Prokhorov.
\newblock Smoothable del {P}ezzo surfaces with quotient singularities.
\newblock {\em Compositio Mathematica}, 146(1):169--192, 2010.

\bibitem{HTU}
P.~Hacking, J.~Tevelev, and G.~Urz\'ua.
\newblock Flipping surfaces.
\newblock {\em J. Algebraic Geom.}, 26(2):279--345, 2017.

\bibitem{KSB}
J.~Koll\'ar and N.~I. Shepherd-Barron.
\newblock Threefolds and deformations of surface singularities.
\newblock {\em Invent. Math.}, 91(2):299--338, 1988.

\bibitem{Lisca}
P.~Lisca.
\newblock On symplectic fillings of lens spaces.
\newblock {\em Transactions of the American Mathematical Society},
  360:765--799, 2008.

\bibitem{LooijengaWahl}
E.~Looijenga and J.~Wahl.
\newblock Quadratic functions and smoothing surface singularities.
\newblock {\em Topology}, 25(3):261--291, 1986.

\bibitem{McDuffSalamon}
D.~McDuff and D.~Salamon.
\newblock {\em Introduction to symplectic topology}.
\newblock Oxford University Press, second edition, 2005.

\bibitem{NPP}
A.~N\'emethi and P.~Popescu-Pampu.
\newblock On the {M}ilnor fibers of cyclic quotient singularities.
\newblock {\em Proceedings of the London Mathematical Society},
  101(3):554--588, 2010.

\bibitem{Rana}
J.~Rana.
\newblock A boundary divisor in the moduli spaces of stable quintic surfaces.
\newblock {\em Internat. J. Math.}, 28(4):1750021, 61, 2017.

\bibitem{RU}
J.~Rana and G.~Urz\'ua.
\newblock Bounding {T}-singularities on {KSBA} surfaces.
\newblock {\em Advances in Mathematics}, 345:814--844, 2019.

\bibitem{Symington}
M.~Symington.
\newblock Four dimensions from two in symplectic topology.
\newblock In {\em Topology and geometry of manifolds ({A}thens, {GA}, 2001)},
  volume~71 of {\em Proc. Sympos. Pure Math.}, pages 153--208. Amer. Math.
  Soc., Providence, RI, 2003.

\bibitem{Zung}
N.~T. Zung.
\newblock Symplectic topology of integrable {H}amiltonian systems, {II}:
  topological classification.
\newblock {\em Compositio Mathematica}, 138(2):125--156, 2003.

\end{thebibliography}
\bibliographystyle{plain}
\end{document}